\numberwithin{equation}{section}
\newtheorem{theorem}{Theorem}[section]
\newtheorem{lemma}[theorem]{Lemma}
\newtheorem{prop}[theorem]{Proposition}
\newtheorem{conj}[theorem]{Conjecture}
\newtheorem{defn}{Definition}[section]
\newtheorem{probenv}{Problem}[section]
\newtheorem{corollary}{Corollary}[theorem]
\newenvironment{varscope}{}{}
\newcommand*{\eg}{e.g.\@\xspace}
\newcommand*{\ie}{i.e.\@\xspace}
\newcommand*{\etc}{%
    \@ifnextchar{.}%
        {etc}%
        {etc.\@\xspace}%
}
\newcommand*{\etal}{%
    \@ifnextchar{.}%
        {et al}%
        {et al.\@\xspace}%
}
\newcommand*{\iid}{%
    \@ifnextchar{.}%
        {i.i.d}%
        {i.i.d.\@\xspace}%
}
\makecommand{\defneq}{\triangleq}
\makecommand{\nats}{{\mathbb N}}
\makecommand{\ints}{{\mathbb Z}}
\makecommand{\perm}{\sigma}
\makecommand{\varperm}{\varsigma}
\makecommand{\pile}{y}
\makecommand{\phase}{t}
\makecommand{\numphase}{T}
\makecommand{\numpile}{m}
\makecommand{\ordervar}{\psi}
\makecommand{\embed}{\phi}
\makecommand{\qshuffle}{{\rm qshuffle}}
\makecommand{\qshuffle}{{\rm \qsym {\text -} shuffle}}
\makecommand{\sshuffle}{{\rm \ssym {\text -} shuffle}}
\makecommand{\pilesfn}{{\rm piles}}
\makecommand{\qphase}{{\rm multi {\text -} phase}}
\makecommand{\sphase}{{\rm s {\text -} phases}}
\makecommand{\digit}{{\rm d}}
\makecommand{\Numpiles}{M}
\makecommand{\twodots}{\mathinner {\ldotp \ldotp}}
\makecommand{\schedules}{{\mathbf y}}
\makecommand{\cumread}{{\rm cumread}}
\makecommand{\shuffleOp}{{\rm shuffle}}
\makecommand{\pileType}{\chi}
\makecommand{\pileTypeInd}{\kappa}
\makecommand{\assignfn}{\pile}
\makecommand{\assignFn}{\assignfn}
\makecommand{\pileIndex}{p}
\makecommand{\condReverseFn}{r}
\makecommand{\assignTypeFn}{z}
\makecommand{\assignTypeSum}{Z}
\makecommand{\numPileSeq}{M}
\makecommand{\numPile}{\numpile}
\makecommand{\numPhase}{\numphase}
\makecommand{\numAccum}{\nats}
\makecommand{\reals}{{\mathbb R}}
\makecommand{\reversePerm}{\perm_{\rm J}}
\makecommand{\pileTypeVec}{{\boldsymbol\chi}}
\makecommand{\pileAssignments}{{\mathcal X}}
\makecommand{\indord}{{\rm ord}}
\makecommand{\arityFn}{{\rm arity}}
\makecommand{\xor}{\oplus}
\makecommand{\permsize}{n}
\makecommand{\nclause}{{n_{\rm c}}}
\makecommand{\nvar}{{n_{\rm v}}}
\makecommand{\radix}{r}
\makecommand{\logict}{{\rm T}}
\makecommand{\logicf}{{\rm F}}
\makecommand{\qsym}{{\mathbf q}}
\makecommand{\ssym}{{\mathbf s}}
\makecommand{\arbpile}{{\mathcal P}}
\def\NPHard/{NP-Hard}
\def\POLY/{P}
\title{Sorting permutations with pile shuffle on queue-like and stack-like piles}
\author{Kyle B. Treleaven \\
\small \texttt{ktreleav@alum.mit.edu}
}
\date{}  
\date{\today} 
\date{May 30, 2025}
\begin{document}

\maketitle

\begin{abstract}

Inspired by a common technique for shuffling a deck of cards on a table without riffling,
we formalize the pile shuffle and investigate its capabilities as a sorting device.
Our study is novel in that
we consider pile shuffle in three variations:
(1) using queue-like piles,
(2) using stack-like piles, and
(3) using a heterogeneous mixture of those two pile types,
either given or decided during the shuffle by the dealer herself.
We
characterize the sortable permutations
of one or more sequential rounds of pile shuffle,
under constraints on the number and types of piles to be used.
Meanwhile, 
we derive formulas to obtain a single sorting shuffle
of a given permutation
on the minimum number of piles, efficiently, whenever feasible.
Next, we present an interesting mathematical framework for interpreting
multi-round sequential shuffles
as singular shuffles on a generally larger number of ``virtual piles'',
lifting most of our previous results to the multi-round setting;
we confirm that repetition augments the power of pile shuffle exponentially, in that
$m$ piles over $T$ rounds enjoys the capacity of $m^T$ piles in a single round.
Finally, we motivate a forthcoming companion paper
in which we prove that
dealer choice---%
where the dealer is allowed to choose the types of the piles during the shuffle---%
makes deciding feasibility of sort \NPHard/
for some variants of multi-round pile shuffle;
proof is by a novel reduction from Boolean satisfiability (SAT).

\end{abstract}

{\bf Keywords.}
Pile shuffle, Sortable permutations, Computational complexity.


\section{Introduction}
\label{sec:intro}

Pile shuffle is a common method for shuffling a deck of cards on a table without riffling,
and is more or less a non-mechanical version
of the \emph{shelf shuffle}~\cite{FULMAN2021112448} used in shuffling machines in casinos:
A deck of cards is dealt into piles on an empty card table, and
the resulting piles are stacked up in some order to form the new deck.
The process may be repeated until the deck is ``shuffled enough''.

While the majority
of the literature on card shuffling techniques
is dedicated to randomization,
in this paper
we investigate the capabilities of
pile shuffle as a sorting device.
The study pertains to randomization, however, by recognizing that
good mixing might be achieved most directly
by combining a sorting device with a randomized labeler.
We consider pile shuffle in three variations:
(1) using queue-like piles, as when cards are flipped over as they are dealt,
(2) using stack-like piles, as when they are not flipped over during the deal, and
(3) using a heterogeneous mixture of those pile types,
preferably decided during the deal by the dealer herself.
As far as the author is aware, ours is the first study to consider
arbitrary mixtures of the two pile types.

Pile shuffle is limited as a sorting device in that
not every deck of cards can be sorted in a single ``round'' of shuffle on a bounded number of piles.
As a trivial example, the only deck that can be sorted using a single queue is the already-sorted deck.
Motivated by this observation,
our focus is on
(1) characterizing the sortable permutations
of one or more sequential rounds of pile shuffle,
under constraints on the number of rounds, and the number and types of piles to be used, and
(2) obtaining efficient algorithms to compute a sorting shuffle of a given permutation whenever feasible.

\emph{Contributions:}
The contributions of this paper are as follows:

We formulate a mathematical model of pile shuffle, and
present necessary and sufficient conditions for a single shuffle to sort an input permutation.
Our formulation is more or less the same thing as $P$-partitions~\cite{FULMAN2021112448},
but stated in a way most conducive to our  analysis.
From these conditions we obtain formulas for
the transcription of a sorting shuffle on the minimum number of piles (a \emph{minimal} sort)
whenever feasible.
Our formulas are efficient in that they can be computed
in a single scan of the input permutation,
in time that is therefore linear in its length.
In the homogeneous all-queues or all-stacks cases,
we reproduce formulas
for the minimum number of piles required,
in terms of well-studied ascent and descent permutation statistics~\cite{bona_combinatorics_2004, butler_stirling-euler-mahonian_2023}
with deep connections to combinatorics.
This allows us to reproduce basic results about the probability
that a random permutation is sortable under given constraints,
through a connection between permutation statistics and Eulerian numbers~\cite{petersen2015eulerian}.

In the multi-round setting,
we present a mathematical framework
which allows us to reinterpret any sequence of shuffles
as an equivalent ``virtual shuffle'' in a single round.
This mathematical equivalence lifts nearly all of our results
about single-round shuffle
to the multi-round setting.
We confirm that repetition augments the power of pile shuffle exponentially, in that
$m$ piles over $T$ rounds has the capacity of $m^T$ piles in a single round.
%
%

%
Finally,
we expose
non-trivial complexity in deciding sort feasibility
in the multi-round case, if
the dealer is allowed to choose the types of the piles arbitrarily during the shuffle.
(The single-round case remains tractable.)
We present two such sort feasibility problems of general interest
and motivate a forthcoming companion paper in which we demonstrate---%
by a novel reduction from Boolean satisfiability (SAT)---that
at least one of those variants is NP-Hard.

\subsection{Literature Review}

Sorting is one of the fundamental problems in computer science~\cite{Knuth:art3},
with deep connections to combinatorics and computational complexity theory.
It has attracted significant research since the beginning of computing,
no doubt because the theoretical properties of sorting algorithms have direct and significant impact
on the performance of large-scale computer systems.
The most popular branches of the literature
tend to focus on the typical random-access model of computer memory,
where data at a particular memory address may be mutated at unit cost.
In such setting the most common objectives are to obtain space- and time- efficient sort algorithms,
optimizing resource consumption.

Sorting problems based on the physical constraints of storage present other unique challenges, and
are also of both practical and theoretical significance.
In physically-motivated settings like pile shuffle,
even asking whether a collection of items can be sorted at all (feasibility)
can become an interesting question.

\emph{Patience sort.}
The present study
is arguably closest to that of Patience sort~\cite{chandramouli2014patience, burstein2006combinatorics},
in that it is similarly motivated by sorting in piles, rather than randomizing.
Patience sort
represents a greedy strategy for the so-called Floyd's Game, and
happens to be an efficient means to compute longest increasing subsequences.
Both Patience sort and sort by pile shuffle are so-called \emph{distribution sorts}~\cite[p.~168]{Knuth:art3}, in that
the sort occurs in a distribution phase, followed by a collection phase.
However, Patience sort is a kind of \emph{merge sort}---%
the piles are merged together into the output one card at a time---%
whereas pile shuffle sort is a so-called \emph{bucket sort}---%
the piles are concatenated, each as a whole, in a chosen order.
%
%

While bucket sorts classically employ some kind of sub-sort within each bucket (sometimes recursively)
that is not always physically practical.
%
Instead, pile shuffle sort is a non- sub-sorted, a.k.a. \emph{pure}, bucket sort.
In lieu of sub-sorting,
we rely on
multiple rounds of shuffle
to augment the power of a bounded number of piles,
at the expense of more time to execute the shuffle.
In the motivating case of sort with a physical facility (\eg, a table of a specific size)
that is often a necessary trade-off.

\emph{Sorting networks.}
There is a significant body of literature about the sortable permutations of so-called sorting \emph{networks}~%
\cite{bona_combinatorics_2004, adda7f6a9afa4f559318c219c37a8dfe, halperin_complexity_2008}.
%
While the study was founded originally on networks of queues and stacks, 
a wide variety of more complex shuffle devices---%
including ones inspired by card shuffling---%
have been studied~\cite{pudwell_sorting_2023,dimitrov_sorting_2022}.
Although bucket sorts like pile shuffle might also be cast in the network framework,
the fit is not entirely natural, and
the author is not aware of previous such treatments.

\emph{Pancake sort.}
Finally, the so-called ``Pancake'' sort~\cite{GATES197947} is another physically inspired problem,
of sorting a disordered stack of pancakes by repeatedly inserting a spatula at some point in the stack and flipping all pancakes above it.
Pancake sort appears in applications in parallel processor networks, and can provide an effective routing algorithm between processors~%
\cite{GARGANO1993315,4032188}.
Pancake sort has also been called an ``educational device'', and 
it was shown to be NP-Hard in~\cite{BULTEAU20151556} by a reduction from $3$SAT.
In the forthcoming companion paper in the series,
we prove NP-Hardness of multi-round heterogeneous variants of pile shuffle sort by reduction from SAT.

\subsection{Organization}

The rest of the paper is organized as follows.
We define notation and summarize necessary background for the paper in Section~\ref{sec:background}.
We formulate the pile shuffle mathematically
and state the objectives of the paper
in Section~\ref{sec:problem}.
We analyze our model and present
results pertaining to
a single round of pile shuffle---%
we cover shuffle on queues, on stacks, and on a mixture of pile types---%
in Sections~\ref{sec:qsort}, \ref{sec:stacksort}, and~\ref{sec:hetero-sort}, respectively.
(We briefly discuss sorting random permutations
in Section~\ref{sec:sort-random}.)
In Section~\ref{sec:multi-round},
we extend our single-round results
to the case of
multiple sequential rounds of pile shuffle.
Then
in Section~\ref{sec:dealer-choice-multi},
we expose the additional complexity
of allowing the dealer to choose the types of piles
in the multi-round setting,
motivating a forthcoming paper on the topic.
Finally, we summarize our results, and offer conclusions, in Section~\ref{sec:conclusion}.

\section{Background}
\label{sec:background}

In this section we introduce notation used throughout the paper, 
as well as
necessary mathematical background about ordered sets and permutations.

\subsection{Notation}

We use the following notation throughout the paper.

\emph{Ranges.}
We will denote by $\nats$ the natural numbers, and by $[n]$ the natural range $\{ 1, 2, \ldots, n \}$.
We will denote
by $[n] \pm k$ the shifted range $\{ i \pm k : i \in [n] \}$;
in particular, $[n] - 1 = \{ 0, 1, \ldots, n-1 \}$ is often useful.

\emph{Sequences.}
We will denote the set of all sequences of length $n$ on a set $S$ by $S^{[n]}$, or more lazily by $S^n$.
%
In some instances we will denote sequences in string form:
We may write $f = f_1 f_2 f_3$ to specify a sequence of three elements with $f(1) = f_1$, $f(2) = f_1$, $f(3) = f_3$.
We use superscripting in string form to denote repetition.
For example,
if $ABC$ is the partition of a sequence into three segments, then
$AB^3C = ABBBC$.

\emph{Function composition.}
We use the notation $h = f(g)$ for the composition of functions $f: Z \to Y$ and $g: X \to Z$,
\ie, $h(x) = f(g(x))$ for all $x \in X$.

\emph{Indicator expressions.}
In some equations we use a so-called indicator notation, where
$[P]$ denotes the indicator function associated with a proposition $P$.
That is, $[P] = 1$ if the proposition is true, otherwise $[P] = 0$.
For example $f(x) = [ x \geq 3 ]$ means that $f(x) = 1$ over $x \geq 3$ and $0$ elsewhere.

\emph{Modular arithmetic.}
We use 2-modulo arithmetic sparingly, with notation $a \xor b \doteq (a + b) \mod 2$.

\subsection{Ordered sets}

An \emph{ordered set} is a pair $(X, \preceq)$ of a set $X$ and a binary relation (the order) $\preceq$
that satisfies for all $a, b, c \in X$:
$a \preceq a$ (reflexivity),
$a \preceq b$ and $b \preceq a \implies a = b$ (antisymmetry), and
$a \preceq b$ and $b \preceq c \implies a \preceq c$ (transitivity).
If every two points are comparable, then $\preceq$ is a \emph{total order}.
Any order $\preceq$ can be equivalently expressed in terms of a strict order $\prec$ defined by $a \prec b \iff a \preceq b$ and $a \neq b$.
We read $a \prec b$ as ``$a$ precedes $b$''.

\subsection{Permutations}

\makecommand{\Perm}[1]{{[#1]!}}

A permutation of a finite set $X$ is a bijective mapping $\perm: X \to X$.
It is well known that there are $n!$ (factorial) permutations of a set of size $n$.
We will denote by $X!$ the set of all such permutations.
The identity permutation
is the special permutation $\perm_I$ where $\perm_I(x) = x$ for all $x \in X$.
A composition of two permutations is also a permutation.
The inverse of a permutation $\perm$, denoted $\perm^{-1}$, is the permutation with the property
$\perm^{-1}(\perm) = \perm(\perm^{-1}) = \perm_I$.

\section{Problem Statement}
\label{sec:problem}

In this section
we formalize the pile shuffle with a mathematical framework
used throughout our study,
and 
we state the objectives of the paper.

Pile shuffle begins
with a deck of cards and an empty card table, and 
has two phases---distribution (or ``the deal''), and collection:
During the deal, until the deck is empty,
we place the next card from the top of the deck onto the table,
either directly on the table, creating a new \emph{pile}, or
on top of another pile previously created.
During the collection phase, 
once the deck has been fully dealt out,
we pick up each pile as a whole,
one at a time in some order,
and add it to the bottom of the new deck.

\emph{Deck representation:}
We assume a fixed assignment of labels from $[n]$ to $n$ distinct elements of a deck, so that
any deck ordering can be represented by a permutation $\perm \in [n]!$.
In particular, we represent the order of a given deck
by the permutation
where
$\perm(s)$ is the position of label $s$ 
for every $s \in [n]$.
We call this the \emph{embedding} convention,
which is a departure from a perhaps more typical---and inverse---\emph{sequence} convention, where
$\perm(k)$ would be the label in position $k$.
We use the embedding convention throughout the study because of its property that
$s$ precedes $t$ in the deck if and only if $\perm(s) < \perm(t)$.
A deck is \emph{sorted} if it is represented by the identity permutation $\perm_I$.

\emph{Pile types:}
We consider piles of two types in this paper: queues and stacks.
A queue maintains the order of cards placed on it.
For example,
if we deal the sequence 1234 into a queue, and pick it back up, we obtain 1234 again.
On a card table, flipping the cards over during the deal creates this kind of behavior.
If label $s$ precedes $t$ in the input deck, and they are placed into the same queue together, then
$s$ precedes $t$ in the new deck also.
In contrast, a stack reverses the order of placement:
If $s$ precedes $t$ in the deck and they are placed into the same stack, then
$t$ precedes $s$ in the new deck;
if we deal 1234 into a stack, we obtain 4321 back.
Dealing cards into a pile \emph{without} flipping them over creates stack-like behavior.

\emph{Pile assignment:}
The outcome of a pile shuffle depends on
(1) the input permutation, given,
(2) the pile types used,
(3) the assignment of cards onto piles during the deal, and
(4) the order 
in which the piles are retrieved
during collection.
If we number the piles in the order that they are picked up,
then the assignment of labels to piles can be described
by a function $\pile: [n] \to \nats$ of \emph{pile assignments},
assigning each label $s \in [n]$
to the $\pile(s)$-th pile collected.
Pile shuffle ensures that
the new deck $\varperm$ obeys
$\pile(s) < \pile(t) \implies \varperm(s) < \varperm(t)$,
for every pair of labels $s$ and $t$;
this is true regardless of the pile type(s) that are used.

\emph{Example:}
Suppose we shuffle a deck with label sequence $\perm^{-1} = 456123$ using pile assignments $\pile = 421242$ on stacks.
We can imagine dealing from a face-up deck of cards,
and using $\pile$ to determine
which pile to place
each card
into.
Note that $\pile$
is a function of
a card's label only, and ignores its position in the deck.
First we place
item $\perm^{-1}(1) = 4$ into pile $\pile(4) = 2$, then
item $\perm^{-1}(2) = 5$ into pile $\pile(5) = 4$, and so on.
After the deal we will see piles as below.
\[
\left.
\begin{array}{cccc}
  & 2 & & \\
  & 6 & & 1 \\
3 & 4 & & 5 \\
\hline
P1 & P2 & P3 & P4
\end{array}
\right.
\]
Note the number of non-empty piles used during shuffle is equal to the number of \emph{distinct} pile assignments,
in this case three.
Collecting the piles in increasing order (left-to-right) we obtain the new deck $\varperm$;
in this case $\varperm^{-1} = 326415$.

\emph{Objectives:}
The present paper has two objectives.
First, we seek to characterize the sortable permutations of one or more sequential rounds of pile shuffle,
under constraints on the number of rounds and the number and types of piles to be used.
A permutation is sortable if there exists a permissible pile shuffle---%
\ie, the pile assignments in one or more sequential rounds---%
that causes the deck to become sorted.
Second, we seek efficient algorithms to compute a sort of an input permutation when feasible, or else 
identify that it is not sortable.

\section{Sorting with a pile shuffle on queues}
\label{sec:qsort}

We first study pile shuffle on queues, or \emph{queue shuffle}, 
which lays the foundation for the other cases.
While some of the results of these early sections may be known generally,
the techniques developed here are needed for later sections.
We begin by analyzing
our pile shuffle model
to derive conditions for a single shuffle on queues to sort an input permutation.
From those conditions we derive a simple lower bound on the number of queues required
to sort a given permutation, in terms of well-known permutation statistics.
Finally, we present a formula which constructs a minimal sort
(\ie, a sort on the minimum number of piles)
in time that is linear in the length of the permutation.

\subsection{Analysis}
Suppose that $\varperm \in \Perm{n}$ is the embedding representation of a deck
resulting by
shuffling a deck $\perm \in \Perm{n}$ on queues using pile assignments $\pile$.
We will denote the relation
\[
\qshuffle \, \pile \, \perm = \varperm.
\]
We recall that
for distinct labels $s, t \in [n]$, 
$s$ precedes $t$ in the output, \ie,
$\varperm(s) < \varperm(t)$, either
if its pile assignment is lower,
$\pile(s) < \pile(t)$,
or within the same pile
if $s$ precedes $t$ in the original deck,
$\perm(s) < \perm(t)$.
We can express this mathematically by
\makecommand{\outer}{\Big}
\begin{align}
\label{eq:qshuffle-relation-long}
\qshuffle \, \pile \, \perm = \varperm
\quad \iff \quad
\bigg[
\varperm(s) < \varperm(t)
\iff
\outer( \pile(s), \, \perm(s) \outer)
<
\outer( \pile(t), \, \perm(t) \outer)
\bigg]
.
\end{align}
(The final order is the lexicographical order.)


Relations like~\eqref{eq:qshuffle-relation-long} would become increasingly tedious to carry around
in the sequel, so
we will introduce a more concise notation based on relations between \emph{induced orders}:

If $f: X \to Y$ is an injective function
to an ordered set $(Y,\prec)$, then
we denote by $\prec_f$ the order induced on $X$ by
\begin{align}
\label{eq:induced-order}
x \prec_f x' \iff f(x) \prec f(x').
\end{align}
When $f$ is defined by an expression $f(x)$ we may write
$\prec_f$ as $\indord_x \, f(x)$.
Then if
$(Y_1, \prec_{1})$ and $(Y_2, \prec_{2})$ are two ordered sets, and
we have injections
$f_1 : X \to Y_1$ and $f_2: X\to Y_2$,
then
\begin{align}
\label{eq:induced-order-equality}
\indord_x \, f_1(x) = \indord_x \, f_2(x)
\quad
\iff 
\quad
\Big[
f_1(x) \prec_{1} f_1(x') \iff f_2(x) \prec_{2} f_2(x')
\Big]
.
\end{align}
We read the
the left-hand side as:
$f_1$ and $f_2$ induce the same order on $X$.

We use this condition 
to express~\eqref{eq:qshuffle-relation-long} as
\begin{align}
\label{eq:qshuffle-order-identity}
\qshuffle \, \pile \, \perm = \varperm
\quad \iff \quad
\indord_s \, \varperm(s)
\, = \,
\indord_s \, \outer( \pile(s), \, \perm(s) \outer)
.
\end{align}
In words,
a queue shuffle using pile assignments $\pile$ transforms $\perm$ into $\varperm$
if and only if $\varperm$ induces the same order on $[n]$ as $(\pile, \perm)$ does lexicographically.

\subsection{Sorting shuffles and feasibility}

\makecommand{\descfn}{{\rm desc}}
\makecommand{\readings}{{\rm read}}
\makecommand{\ascruns}{{\rm ascrun}}
\makecommand{\ascsFn}{{\rm ascs}}
\makecommand{\descrunFn}{{\rm descrun}}

A pile shuffle is a sort if it produces as output the identity permutation,
$\varperm = \perm_I$.
When sort
occurs,
$\perm_I(s) = s$ everywhere
simplifies~\eqref{eq:qshuffle-order-identity} to
\begin{align}
\label{eq:sort-order-identity-var}
\qshuffle \, \pile \, \perm = \perm_I
\quad \iff \quad &
\indord_s \, s \,
\, = \,
\indord_s \, \Big( \pile(s), \perm(s) \Big)
\\
\label{eq:qsort-order-var-logical}
\quad \iff \quad &
s < t
\iff
\Big( \pile(s), \perm(s) \Big)
<
\Big( \pile(t), \perm(t) \Big)
.
\end{align}
Our first result expresses this sort condition in an 
efficiently checkable
form.
\begin{lemma}[Sort with queues]
\label{lemma:sort-with-queues}
A pile assignment function $\pile$ is a sort of permutation $\perm \in \Perm{n}$ on queues
($\qshuffle \, \pile \, \perm = \perm_I$)
if and only if 
\begin{equation}
\label{eq:qsort-cond}
\pile(s+1)
\geq
\pile(s) + \left[ \perm(s+1) < \perm(s) \right]
\qquad \forall s \in [n-1]
.
\end{equation}
\end{lemma}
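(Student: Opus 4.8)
The plan is to build directly on the characterization \eqref{eq:qsort-order-var-logical} already derived, which says that $\qshuffle\,\pile\,\perm = \perm_I$ holds precisely when the map $s \mapsto (\pile(s),\perm(s))$ is order-preserving from $([n],<)$ into the lexicographic order on $\nats \times [n]$. So the task reduces to showing that this ``global'' order condition is equivalent to the ``local'' inequality \eqref{eq:qsort-cond} on consecutive indices, and the proof splits into the two implications.

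For the forward direction I would instantiate the global condition at $t = s+1$. Since $s < s+1$, it gives $(\pile(s),\perm(s)) <_{\mathrm{lex}} (\pile(s+1),\perm(s+1))$, which unfolds to: either $\pile(s) < \pile(s+1)$, or else $\pile(s) = \pile(s+1)$ and $\perm(s) < \perm(s+1)$. If $\perm(s+1) < \perm(s)$ then the second alternative is impossible, so $\pile(s+1) \geq \pile(s) + 1$; if instead $\perm(s+1) > \perm(s)$ (the two values are distinct because $\perm$ is a permutation), both alternatives yield $\pile(s+1) \geq \pile(s)$. In either case this is exactly \eqref{eq:qsort-cond} with the indicator $[\perm(s+1)<\perm(s)]$ on the right.

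For the reverse direction, note first that \eqref{eq:qsort-cond} trivially gives $\pile(s+1) \geq \pile(s)$ for every $s$, so $\pile$ is nondecreasing and $\pile(s) \leq \pile(t)$ whenever $s \leq t$. Now fix $s < t$ and aim for $(\pile(s),\perm(s)) <_{\mathrm{lex}} (\pile(t),\perm(t))$. If $\pile(s) < \pile(t)$ we are done immediately. Otherwise $\pile(s) = \pile(t)$, and since $\pile$ is integer-valued and nondecreasing it must be constant on $\{s, s+1, \ldots, t\}$; substituting this back into \eqref{eq:qsort-cond} at each index $k$ with $s \leq k < t$ forces $[\perm(k+1) < \perm(k)] = 0$, i.e.\ $\perm(k) < \perm(k+1)$, and telescoping over the interval gives $\perm(s) < \perm(t)$. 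So again $(\pile(s),\perm(s)) <_{\mathrm{lex}} (\pile(t),\perm(t))$. This shows $s \mapsto (\pile(s),\perm(s))$ is order-preserving; since it is also injective (as $\perm$ is a bijection) and both orders are total, it automatically reflects the order, giving the full biconditional of \eqref{eq:qsort-order-var-logical} and hence $\qshuffle\,\pile\,\perm = \perm_I$.

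The one delicate point is the case $\pile(s) = \pile(t)$ in the reverse direction: one has to observe that a nondecreasing integer sequence with equal endpoints is constant on the whole interval between them, and only then can the local constraints be chained across that interval. Everything else is routine: unfolding the lexicographic order, reading off the indicator, and using that a permutation takes distinct values so that $\perm(s+1) \ne \perm(s)$.
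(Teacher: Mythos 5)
Your proposal is correct and follows essentially the same route as the paper: reduce the global order condition \eqref{eq:qsort-order-var-logical} to comparisons of consecutive indices, then handle the two cases $\perm(s)<\perm(s+1)$ and $\perm(s+1)<\perm(s)$ to read off the indicator in \eqref{eq:qsort-cond}. The only difference is presentational --- the paper dispatches the chaining step with a one-line appeal to transitivity, whereas you spell it out (constancy of the nondecreasing pile function between equal endpoints, telescoping the ascents, and the injectivity/total-order remark), which fills in details the paper leaves implicit but adds no new idea.
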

\begin{proof}
Due to transitivity, 
the right-hand side of~\eqref{eq:qsort-order-var-logical} is true if and only if
\begin{equation}
\label{eq:qsort-cond-adj}
\Big( \pile(s), \perm(s) \Big)
<
\Big( \pile(s+1), \perm(s+1) \Big)
\qquad \forall s \in [n-1]
.
\end{equation}
Therefore, we may prove the lemma by equating~\eqref{eq:qsort-cond} with~\eqref{eq:qsort-cond-adj}.
This can be done by cases:
Wherever $\perm(s) < \perm(s+1)$,
\eqref{eq:qsort-cond-adj} is satisfied if and only if $\pile(s+1) \geq \pile(s)$;
wherever $\perm(s+1) < \perm(s)$,
if and only if $\pile(s+1)$ is \emph{strictly} greater, \ie, $\pile(s+1) \geq \pile(s) + 1$.
\eqref{eq:qsort-cond} is precisely the combined expression of these two cases
using indicator notation.
\end{proof}
Unlike~\eqref{eq:qsort-order-var-logical},
the condition~\eqref{eq:qsort-cond} can be checked with a linear scan.
An intuition behind the condition is as follows:
A sort may never assign element $(s+1)$ to a lower-valued pile than $s$, \ie,
one this is collected earlier in the collection phase.
However, as long as
$s$ precedes $(s+1)$ during the deal, then
$(s+1)$ may be placed in any pile $\pile(s+1) \geq \pile(s)$.
Otherwise---%
mathematically, when
$\perm(s+1) < \perm(s)$---%
then $(s+1)$ must be placed into a \emph{strictly} higher-valued pile,
collected later, \ie,
$\pile(s+1) \geq \pile(s) + 1$.
For example, 
when dealing the sequence $1423$,
$2$ could be dealt into the same queue as $1$, since it is dealt after $1$,
but $3$ cannot be dealt into the same queue as $4$, since $4$ precedes it;
ultimately, 4 must be found in a higher-valued pile than 3, collected later.

\eqref{eq:qsort-cond} is equivalently stated:
\begin{enumerate}
\item $\pile$ is monotonically non-decreasing; and
\item it is strictly increasing at every descent of $\perm$.
\end{enumerate}
A descent of a sequence $x$ is a position $t$ where $x(t+1) < x(t)$.
The number of descents of a permutation,
\begin{equation}
\label{eq:num-descents}
\descfn(\perm) \defneq \sum_{s=1}^{n-1} \left[ \perm(s+1) < \perm(s) \right]
,
\end{equation}
is a well-studied permutation statistic~\cite[p.~4]{bona_combinatorics_2004}.
It is one less than the number of ascending runs, $\ascruns(\perm)$, where
the ascending runs of a sequence are the contiguous increasing subsequences that cannot be extended on either end.
The number of ascending runs of $\perm$ also corresponds to the number of 
``readings'' $\readings(\perm^{-1})$ of the deck sequence $\perm^{-1}$, which is
the number of times one must scan through $\perm^{-1}$ to find the numbers $1, 2, \ldots, n$
in order without ever backtracking.
For example the sequence $364152$ has three readings: $12$, $345$, and $6$.

\begin{lemma}
\label{lemma:sort-lower-bound}
If $\pile$ is the pile assignments of a queue shuffle sort of permutation $\perm \in \Perm{n}$,
and, without loss of generality, $\pile(1) = 1$, then
the number of piles $\pile(n)$ is bounded by
\begin{equation}
\label{eq:min-piles}
\pile(n)
\geq
1 + \descfn\left(\perm\right)
=
\ascruns\left(\perm\right)
=
\readings(\perm^{-1})
.
\end{equation}
\end{lemma}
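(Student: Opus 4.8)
The plan is to read the bound straight off the pointwise characterization already established in Lemma~\ref{lemma:sort-with-queues}, via a telescoping sum. First I would dispatch the normalization $\pile(1)=1$ as genuinely without loss of generality: the condition~\eqref{eq:qsort-cond} constrains only the successive differences $\pile(s+1)-\pile(s)$, so it is invariant under adding a constant to $\pile$; and since~\eqref{eq:qsort-cond} in particular forces $\pile$ to be monotonically non-decreasing, $\pile(1)$ is the minimum value of $\pile$, so subtracting $\pile(1)-1$ produces an equivalent queue-shuffle sort still valued in $\nats$, with first value $1$ and with $\pile(n)$ decreased by the same constant. Thus proving the inequality under $\pile(1)=1$ suffices.

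Next, assuming $\pile(1)=1$ and that $\pile$ sorts $\perm$, Lemma~\ref{lemma:sort-with-queues} gives $\pile(s+1)-\pile(s)\ \geq\ [\,\perm(s+1)<\perm(s)\,]$ for every $s\in[n-1]$. Summing these $n-1$ inequalities, the left-hand side telescopes to $\pile(n)-\pile(1)=\pile(n)-1$, while the right-hand side is by definition~\eqref{eq:num-descents} exactly $\descfn(\perm)$. Rearranging yields $\pile(n)\ \geq\ 1+\descfn(\perm)$, the first part of the claim.

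The remaining two equalities $1+\descfn(\perm)=\ascruns(\perm)=\readings(\perm^{-1})$ are the combinatorial identities recalled in the text just before the lemma, and I would justify them in one line each: each descent of $\perm$ terminates an ascending run and the runs partition $[n]$, so there is exactly one more ascending run than descent; and a maximal increasing contiguous block $\perm(s),\perm(s+1),\ldots$ of positions is precisely one uninterrupted ``reading'' of the labels $s,s+1,\ldots$ within the deck sequence $\perm^{-1}$, so the ascending runs of $\perm$ are in bijection with the readings of $\perm^{-1}$. I do not anticipate a real obstacle here — the only place needing any care at all is the brief argument that~\eqref{eq:qsort-cond} already encodes monotonicity of $\pile$, which makes the $\pile(1)=1$ reduction legitimate; everything else is the single telescoping estimate.
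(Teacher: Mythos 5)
Your proposal is correct and follows essentially the same route as the paper: telescope $\pile(n)-\pile(1)$ and bound each increment via condition~\eqref{eq:qsort-cond} from Lemma~\ref{lemma:sort-with-queues}, then identify the sum with $\descfn(\perm)$ by~\eqref{eq:num-descents}. Your extra remarks on the $\pile(1)=1$ normalization and the run/reading identities are fine but go beyond what the paper's proof spells out.
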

\begin{proof}
We obtain~\eqref{eq:min-piles}
by
writing $\pile(n)$ as a telescoping sum
starting from $\pile(1) = 1$,
bounding the sum by rearranging~\eqref{eq:qsort-cond}, and then
substituting~\eqref{eq:num-descents} in the bound:
\begin{align*}
\pile(n) &= \pile(1) + \sum_{s=1}^{n-1} \pile(s+1) - \pile(s)
\\
&\geq
1 + \sum_{s=1}^{n-1} \left[ \perm(s+1) < \perm(s) \right]
= 1 + \descfn\left(\perm\right)
.
\end{align*}
\end{proof}

Finally we obtain a minimal sort in the sense of fewest piles used.
\begin{lemma}
\label{lemma:qsort-bound-realz}
Given a permutation $\perm \in [n]!$,
let $\pile^*$ be defined by
\begin{equation}
\label{eq:qsort-bound-realz}
\pile^*(s) = 1 + \sum_{s'=1}^{s-1} \left[ \perm(s'+1) < \perm(s') \right]
\qquad \forall s \in [n]
.
\end{equation}
We call it the cumulative ascending runs function of $\perm$.
$\pile^*$ is a minimal assignment function for sorting $\perm$ on queues.

\end{lemma}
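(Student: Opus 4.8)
The plan is to verify two things separately: first, that $\pile^*$ is a legitimate queue-shuffle sort of $\perm$; and second, that no queue-shuffle sort of $\perm$ uses fewer piles than $\pile^*$ does.

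For the first point, I would read off from~\eqref{eq:qsort-bound-realz} that the consecutive increments of $\pile^*$ are
\[
\pile^*(s+1) - \pile^*(s) \;=\; \left[ \perm(s+1) < \perm(s) \right]
\qquad \forall s \in [n-1],
\]
since the sum defining $\pile^*(s+1)$ is obtained from the one defining $\pile^*(s)$ by appending exactly the single term $[\perm(s+1) < \perm(s)]$. In particular the sort condition~\eqref{eq:qsort-cond} of Lemma~\ref{lemma:sort-with-queues} is satisfied (with equality), so that lemma immediately yields $\qshuffle \, \pile^* \, \perm = \perm_I$; that is, $\pile^*$ sorts $\perm$ on queues.

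For the second point I would count the piles $\pile^*$ actually uses. By the increment identity above, $\pile^*$ is monotonically non-decreasing, grows by at most $1$ at each step, and starts from $\pile^*(1) = 1$; hence its image is the full contiguous range $\left[ \pile^*(n) \right]$, which has exactly $\pile^*(n)$ elements. Telescoping as in Lemma~\ref{lemma:sort-lower-bound} and substituting~\eqref{eq:num-descents} gives $\pile^*(n) = 1 + \descfn(\perm)$, so $\pile^*$ uses precisely $1 + \descfn(\perm)$ distinct piles. To see this is optimal, let $\pile$ be any queue-shuffle sort of $\perm$: by Lemma~\ref{lemma:sort-with-queues}, $\pile$ is non-decreasing and strictly increases at each of the $\descfn(\perm)$ descents of $\perm$, so it takes at least $1 + \descfn(\perm)$ distinct values (equivalently, after translating so $\pile(1) = 1$, apply the bound~\eqref{eq:min-piles}). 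Combining, $\pile^*$ attains the minimum possible number of piles and is therefore a minimal assignment function for sorting $\perm$ on queues.

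Each individual step is short, so the only real care needed is the bookkeeping distinction between the \emph{value} $\pile^*(n)$ and the \emph{number of distinct piles used}: these coincide here precisely because $\pile^*$ moves in unit steps and hence hits every integer in $\left[1, \pile^*(n)\right]$. That unit-step property fails for a general sort, which is exactly why the optimality argument for arbitrary $\pile$ must be phrased in terms of counting strict increases (one per descent) rather than merely its endpoint value. I expect this reconciliation to be the subtlest part of the write-up; everything else is a direct appeal to Lemmas~\ref{lemma:sort-with-queues} and~\ref{lemma:sort-lower-bound}.
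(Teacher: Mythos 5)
Your proof is correct and follows essentially the same route as the paper: verify that $\pile^*$ satisfies the sort condition of Lemma~\ref{lemma:sort-with-queues} (in fact with equality), then note $\pile^*(n) = 1 + \descfn(\perm)$ matches the lower bound of Lemma~\ref{lemma:sort-lower-bound}. Your extra care in distinguishing the endpoint value $\pile^*(n)$ from the number of distinct piles used (coinciding here because of unit steps) is a small refinement of bookkeeping the paper leaves implicit, not a different argument.
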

\begin{proof}
It is easy to verify that
the definition~\eqref{eq:qsort-bound-realz} of $\pile^*$ satisfies~\eqref{eq:qsort-cond}
and that
$\pile^*(n) = 1 + \descfn(\perm)$
by construction,
matching the lower bound~\eqref{eq:min-piles}.
\end{proof}

\begin{corollary}
\label{cor:min-piles-qsort}
A permutation $\perm$ can be sorted with $\numpile$ queues if and only if 
$\ascruns(\perm) \leq \numpile$.
\end{corollary}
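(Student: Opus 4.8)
The plan is to derive Corollary~\ref{cor:min-piles-qsort} directly from Lemma~\ref{lemma:sort-lower-bound} and Lemma~\ref{lemma:qsort-bound-realz}, which together already pin down the minimum number of piles. The statement is an ``if and only if'', so I would handle the two directions separately, treating the number of queues available as a budget $\numpile$ and asking when a sorting pile assignment $\pile$ with $\pile(1) = 1$ and $\pile(n) \leq \numpile$ exists.

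For the \emph{only if} direction, suppose $\perm$ can be sorted with $\numpile$ queues. Renumbering the piles in collection order does not change which permutation results (the relevant data in~\eqref{eq:qsort-cond} is only the relative values of the $\pile(s)$), so without loss of generality the assignment $\pile$ uses pile labels from $[\numpile]$ with $\pile(1) = 1$; in particular $\pile(n) \leq \numpile$. Now Lemma~\ref{lemma:sort-lower-bound} gives $\numpile \geq \pile(n) \geq \ascruns(\perm)$, which is exactly the claimed inequality.

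For the \emph{if} direction, suppose $\ascruns(\perm) \leq \numpile$. Take the cumulative ascending runs function $\pile^*$ from~\eqref{eq:qsort-bound-realz}. By Lemma~\ref{lemma:qsort-bound-realz} it is a sorting assignment for $\perm$ on queues, and it satisfies $\pile^*(n) = 1 + \descfn(\perm) = \ascruns(\perm) \leq \numpile$; since $\pile^*$ is non-decreasing with $\pile^*(1) = 1$, its range is contained in $[\ascruns(\perm)] \subseteq [\numpile]$, so it uses at most $\numpile$ distinct piles. Hence $\perm$ is sortable with $\numpile$ queues.

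Honestly, there is no real obstacle here: both hard steps — the lower bound and the matching construction — were already done in the two preceding lemmas, and the corollary is just the observation that the bound is tight and monotone in $\numpile$. The only point requiring a word of care is the ``without loss of generality'' relabeling in the \emph{only if} direction, i.e.\ noting that collecting the piles in increasing order means the concrete pile indices are irrelevant beyond their order, so a sort using $\numpile$ piles can always be rewritten as one with $\pile(1)=1$ and all values in $[\numpile]$, matching the normalization under which Lemma~\ref{lemma:sort-lower-bound} was stated.
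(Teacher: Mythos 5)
Your proposal is correct and follows essentially the same route as the paper, which also deduces the corollary from the lower bound of Lemma~\ref{lemma:sort-lower-bound} matched by the construction of Lemma~\ref{lemma:qsort-bound-realz}; you simply spell out the two directions (and the harmless relabeling of piles) that the paper treats as immediate.
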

\begin{proof}
The corollary is an immediate result of the minimal construction~\eqref{eq:qsort-bound-realz}
achieving the lower bound~\eqref{eq:min-piles} in all instances.
\end{proof}

\subsection{Demonstration}

Lemma~\ref{lemma:sort-with-queues} provides
a necessary and sufficient condition
for a pile shuffle on queues to sort an input permutation.
The condition is efficiently checkable in a linear scan of the input permutation.
We have also derived the formula~\eqref{eq:min-piles} for the minimum number of queues needed to sort an input permutation,
in terms of the well-known permutation statistic of ascending runs.
A companion formula~\eqref{eq:qsort-bound-realz} produces a sort if feasible on the minimum possible number of queues,
in linear time.

We end this section with a demonstration of sorting with pile shuffle guided by Lemma~\ref{lemma:qsort-bound-realz}.
We start by writing an example permutation in the so-called two-line notation,
a two-row matrix where the permutation pre-image is enumerated across the top row, and
the matching image is written underneath it:
\[
\left\{\begin{array}{cccccccc}
1 & 2 & 3 & 4 & 5 & 6 & 7 & 8 \\
3 & 7 & 2 & 4 & 6 & 8 & 1 & 5
\end{array}\right\}
=
\left\{\begin{array}{ccc}
\ldots & s & \ldots
\\
\ldots & \perm(s) & \ldots
\end{array}\right\}
.
\]
We use the set-notation brackets rather than the typical parentheses to emphasize that the column order does not matter.
However, with the first row in the normal ascending order,
the action of~\eqref{eq:qsort-bound-realz} is easily shown by adding $\pile^*$ as a third row to the matrix
\[
\left\{\begin{array}{cccccccc}
1 & 2 & 3 & 4 & 5 & 6 & 7 & 8 \\
3 & 7 & 2 & 4 & 6 & 8 & 1 & 5 \\
1 & 1 & 2 & 2 & 2 & 2 & 3 & 3 
\end{array}\right\}
=
\left\{\begin{array}{ccc}
\ldots & s & \ldots
\\
\ldots & \perm(s) & \ldots
\\
\ldots & \pile^*(s) & \ldots
\end{array}\right\}
.
\]
It is easy to check that $\pile^*$ increments at each of the descents of $\perm$.

If we rearrange the columns so that the second row appears in ascending order, then
the sequence representation of the deck appears in the top row,
\[
\left\{\begin{array}{cccccccc}
7 & 3 & 1 & 4 & 8 & 5 & 2 & 6 \\
1 & 2 & 3 & 4 & 5 & 6 & 7 & 8 \\
3 & 2 & 1 & 2 & 3 & 2 & 1 & 2 
\end{array}\right\}
=
\left\{\begin{array}{ccc}
\ldots & s & \ldots
\\
\ldots & \perm(s) & \ldots
\\
\ldots & \pile^*(s) & \ldots
\end{array}\right\}
.
\]
(Note that 
the corresponding inverse permutation $\perm^{-1}$ can be obtained in this notation by exchanging the first two rows.)
This column order supports the creation of a \emph{shuffle tableau}:
In the next tableau,
each label $s$ in a row $\pile^*(s)$ and column $\perm(s)$ indicates that
element $s$ is placed into pile $\pile^*(s)$ as the $\perm(s)$-th placement of the deal.
\[
\left.
\begin{tabular}{r|cccccccc}
\diagbox{$\pile^*(s)$}{$\perm(s)$}
	& 1 	& 2 	& 3 	& 4 	& 5 	& 6 	& 7 	& 8	\\
\hline
1	&	&	& 1	&	&	&	& 2	&	\\
2	&	& 3	&	& 4	&	& 5	&	& 6	\\
3	& 7	&	&	&	& 8	&	&	& 
\end{tabular}
\right.
\]

The piles are collected in row order, top-to-bottom.
Queue-like piles are collected left-to-right, whereas stack-like piles would be collected right-to-left.
Collecting the contents of the tableau in this way demonstrates that $\pile^*$ indeed sorts the input on three queues.

\section{Sorting with a pile shuffle on stacks}
\label{sec:stacksort}

Next we consider sorting with stacks, which requires only a minor modification
of our analysis of shuffle on queues.
We present companion results for pile shuffle on stacks to those just derived for queues.

Suppose that $\varperm \in \Perm{n}$ represents the result
of shuffling a deck $\perm \in \Perm{n}$
on \emph{stacks}
using pile assignments $\pile$.
We will denote the relation
\begin{align*}
\label{eq:define-sshuffle}
\sshuffle \, \pile \, \perm = \varperm.
\end{align*}
A shuffle on stacks maintains the same order of the piles as a shuffle on queues, but
within each pile
labels are in the reverse order as if queues were used.
Therefore, the version of~\eqref{eq:qshuffle-order-identity}
which holds for shuffling on stacks is
\begin{equation}
\label{eq:sshuffle-order-identity}
\sshuffle \, \pile \, \perm = \varperm
\qquad \iff \qquad
\indord_s \, \varperm(s)
=
\indord_s \, \Big( \pile(s), \, {-\perm(s)} \Big).
\end{equation}
Somewhat intuitively then,
our results when sorting with stacks are mirror images of those just developed for sorting with queues.
For example,
now each \emph{ascent} of the input permutation requires a new stack
for sort to occur.
%
The proofs of the next results follow the same logic (mirrored) as their counterparts for queues, and
we omit them as exercise(s) for the reader.

\begin{lemma}[Sort with stacks]
\label{lemma:sort-with-stacks}
A pile assignment function $\pile$ is a sort of permutation $\perm \in \Perm{n}$ on stacks
($\sshuffle \, \pile \, \perm = \perm_I$)
if and only if 
\begin{equation}
\label{eq:ssort-cond}
\pile(s+1)
\geq
\pile(s) + \left[ \perm(s+1) > \perm(s) \right]
\qquad \forall s \in [n-1]
.
\end{equation}
\end{lemma}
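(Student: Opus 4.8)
The plan is to mirror exactly the proof of Lemma~\ref{lemma:sort-with-queues}, exploiting the fact that stack shuffle differs from queue shuffle only by reversing the within-pile order, which is captured formally by replacing $\perm(s)$ with $-\perm(s)$ in the induced-order identity~\eqref{eq:sshuffle-order-identity}. Specializing~\eqref{eq:sshuffle-order-identity} to $\varperm = \perm_I$ gives the sort condition
\[
\sshuffle \, \pile \, \perm = \perm_I
\quad \iff \quad
s < t \iff \Big( \pile(s), -\perm(s) \Big) < \Big( \pile(t), -\perm(t) \Big)
,
\]
in lexicographic order, which is the exact analogue of~\eqref{eq:qsort-order-var-logical}.

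First I would invoke transitivity to reduce this total-order identity to the adjacent case: the right-hand side holds for all pairs $s < t$ if and only if $\big( \pile(s), -\perm(s) \big) < \big( \pile(s+1), -\perm(s+1) \big)$ for all $s \in [n-1]$, just as~\eqref{eq:qsort-cond-adj} followed from~\eqref{eq:qsort-order-var-logical}. Then I would argue by cases on the sign of $\perm(s+1) - \perm(s)$. Where $\perm(s) > \perm(s+1)$ (a descent of $\perm$), we have $-\perm(s) < -\perm(s+1)$, so the lexicographic comparison is satisfied as soon as $\pile(s+1) \geq \pile(s)$. Where $\perm(s+1) > \perm(s)$ (an ascent), we have $-\perm(s) > -\perm(s+1)$, so the second coordinate points the wrong way and the comparison forces the first coordinate to be strictly larger, $\pile(s+1) \geq \pile(s) + 1$. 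Combining the two cases with indicator notation yields precisely~\eqref{eq:ssort-cond}, namely $\pile(s+1) \geq \pile(s) + \left[ \perm(s+1) > \perm(s) \right]$.

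There is no real obstacle here beyond bookkeeping care with the sign flip: the only place to be vigilant is the direction of the inequality in the second coordinate, since $a < b \iff -a > -b$, which is exactly what turns "strictly increasing at descents" (for queues) into "strictly increasing at ascents" (for stacks). I would also note that $\perm$ being a permutation means $\perm(s) \neq \perm(s+1)$ always, so the two cases are exhaustive and there is no boundary subtlety. This completes the proof, and the remaining stacks results (the lower bound in terms of ascents, the minimal construction via cumulative ascending runs of the reversed statistic, and the corollary on the number of stacks) follow by the same mirrored substitutions applied to Lemmas~\ref{lemma:sort-lower-bound}, \ref{lemma:qsort-bound-realz} and Corollary~\ref{cor:min-piles-qsort}.
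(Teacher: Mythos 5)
Your proof is correct and is exactly the argument the paper intends: it explicitly omits the stacks proof as the mirrored version of Lemma~\ref{lemma:sort-with-queues}, and your specialization of~\eqref{eq:sshuffle-order-identity} to $\varperm=\perm_I$, reduction to adjacent pairs by transitivity, and sign-flipped case analysis (strict increase at ascents rather than descents) is precisely that mirroring.
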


\begin{lemma}
\label{lemma:ssort-lower-bound}
If $\pile$ is the pile assignments of a stack shuffle sort of permutation $\perm \in \Perm{n}$,
where $\pile(1) = 1$, then
the number of piles $\pile(n)$ satisfies
\begin{equation}
\label{eq:min-stacks}
\pile(n)
\geq
1 + \ascsFn\left(\perm\right)
=
\descrunFn\left(\perm\right)
;
\end{equation}
$\ascsFn$ and $\descrunFn$ are the ascents and descending runs functions respectively;
analogous to descents and ascending runs previously discussed.
\end{lemma}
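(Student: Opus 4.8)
The plan is to mirror the proof of Lemma~\ref{lemma:sort-lower-bound} exactly, replacing descents by ascents throughout. First I would recall that Lemma~\ref{lemma:sort-with-stacks} characterizes a stack shuffle sort of $\perm$ by the condition~\eqref{eq:ssort-cond}, namely $\pile(s+1) \geq \pile(s) + [\perm(s+1) > \perm(s)]$ for all $s \in [n-1]$. This is the exact analogue of~\eqref{eq:qsort-cond} with the descent indicator $[\perm(s+1) < \perm(s)]$ replaced by the ascent indicator $[\perm(s+1) > \perm(s)]$, so the same telescoping argument applies verbatim.

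Concretely, I would write $\pile(n)$ as the telescoping sum $\pile(1) + \sum_{s=1}^{n-1}\bigl(\pile(s+1) - \pile(s)\bigr)$, substitute $\pile(1) = 1$, bound each summand below using~\eqref{eq:ssort-cond} by $[\perm(s+1) > \perm(s)]$, and recognize the resulting sum as the number of ascents $\ascsFn(\perm) \defneq \sum_{s=1}^{n-1}[\perm(s+1) > \perm(s)]$. This yields $\pile(n) \geq 1 + \ascsFn(\perm)$. The final equality $1 + \ascsFn(\perm) = \descrunFn(\perm)$ is the combinatorial fact that a permutation's descending runs (maximal contiguous strictly decreasing subsequences) are separated by exactly its ascents, so their count exceeds the number of ascents by one --- the mirror image of the relation $1 + \descfn(\perm) = \ascruns(\perm)$ already noted in the text.

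There is no real obstacle here; the statement is a routine reflection of Lemma~\ref{lemma:sort-lower-bound}, and the author has already announced that such stack-side proofs are ``omitted as exercise(s) for the reader.'' If anything, the only point worth a sentence of care is making explicit the order-reversal that underlies everything: sorting on stacks corresponds, via~\eqref{eq:sshuffle-order-identity}, to the induced order $\indord_s (\pile(s), -\perm(s))$, so that the second-coordinate comparison flips and ascents of $\perm$ play the role that descents played for queues. Once that correspondence is stated, the telescoping bound and the descending-runs identity follow immediately, and one could equally well deduce the lemma by applying Lemma~\ref{lemma:sort-lower-bound} to the reversed permutation $s \mapsto n+1-\perm(s)$, whose descents are precisely the ascents of $\perm$.
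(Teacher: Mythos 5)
Your proof is correct and is exactly the argument the paper intends: the paper omits this proof as an exercise, stating it follows the same (mirrored) logic as Lemma~\ref{lemma:sort-lower-bound}, and your telescoping bound via~\eqref{eq:ssort-cond} together with the ascents/descending-runs identity is precisely that mirrored argument.
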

\begin{lemma}
\label{lemma:ssort-bound-realz}
Given a permutation $\perm \in [n]!$,
let $\pile^*$ be defined by
\begin{equation}
\label{eq:ssort-bound-realz}
\pile^*(s) = 1 + \sum_{s'=1}^{s-1} \left[ \perm(s'+1) > \perm(s') \right]
\qquad \forall s \in [n]
.
\end{equation}
We call it the cumulative descending runs function of $\perm$.
$\pile^*$ is a minimal sort of $\perm$ on stacks.
\end{lemma}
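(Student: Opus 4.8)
The plan is to mirror, line for line, the proof of Lemma~\ref{lemma:qsort-bound-realz}, using Lemma~\ref{lemma:sort-with-stacks} in place of Lemma~\ref{lemma:sort-with-queues} and Lemma~\ref{lemma:ssort-lower-bound} in place of Lemma~\ref{lemma:sort-lower-bound}. Concretely, I would establish two things: (i) that $\pile^*$ as defined in~\eqref{eq:ssort-bound-realz} actually sorts $\perm$ on stacks, i.e.\ it satisfies the characterization~\eqref{eq:ssort-cond}; and (ii) that $\pile^*$ attains the lower bound~\eqref{eq:min-stacks} on the number of stacks, so no sorting assignment can use fewer piles.

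For step (i), I would simply compute the forward difference of $\pile^*$ directly from~\eqref{eq:ssort-bound-realz}: for each $s \in [n-1]$,
\[
\pile^*(s+1) - \pile^*(s) = \left[ \perm(s+1) > \perm(s) \right],
\]
since the defining sum telescopes. Hence $\pile^*(s+1) = \pile^*(s) + [\perm(s+1) > \perm(s)] \geq \pile^*(s) + [\perm(s+1) > \perm(s)]$ holds with equality, so condition~\eqref{eq:ssort-cond} is met and, by Lemma~\ref{lemma:sort-with-stacks}, $\sshuffle \, \pile^* \, \perm = \perm_I$.

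For step (ii), I would evaluate $\pile^*$ at $s = n$: by construction $\pile^*(1) = 1$ and
\[
\pile^*(n) = 1 + \sum_{s'=1}^{n-1} \left[ \perm(s'+1) > \perm(s') \right] = 1 + \ascsFn(\perm),
\]
which is exactly the right-hand side of~\eqref{eq:min-stacks}. Since Lemma~\ref{lemma:ssort-lower-bound} shows every stack-shuffle sort of $\perm$ (normalized so that $\pile(1) = 1$) uses at least $1 + \ascsFn(\perm)$ piles, $\pile^*$ is minimal. I do not anticipate any real obstacle here: the argument is the exact mirror of the queue case under the substitution of ascents for descents and the reversal $\perm(s) \mapsto -\perm(s)$ that already appears in~\eqref{eq:sshuffle-order-identity}. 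The only point requiring a word of care is the normalization $\pile(1)=1$ used in invoking Lemma~\ref{lemma:ssort-lower-bound}, which is without loss of generality because shifting all pile indices by a constant changes neither whether an assignment sorts (condition~\eqref{eq:ssort-cond} involves only differences) nor the count of distinct piles used.
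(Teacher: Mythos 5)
Your proposal is correct and is exactly the argument the paper intends: the paper omits this proof as an exercise, noting it mirrors the queue case, and your verification that $\pile^*$ satisfies~\eqref{eq:ssort-cond} with equality and attains the bound $1 + \ascsFn(\perm)$ of Lemma~\ref{lemma:ssort-lower-bound} is the direct mirror of the paper's proof of Lemma~\ref{lemma:qsort-bound-realz}. No gaps; the remark on the normalization $\pile(1)=1$ is a sensible (if optional) addition.
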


\begin{corollary}
\label{cor:stack-minpiles}
A permutation $\perm$ can be sorted with $\numpile$ stack-like piles if and only if
$\descrunFn(\perm) \leq \numpile$.
\end{corollary}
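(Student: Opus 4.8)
The statement to prove is Corollary~\ref{cor:stack-minpiles}: a permutation $\perm$ can be sorted with $\numpile$ stack-like piles if and only if $\descrunFn(\perm) \leq \numpile$.

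\medskip

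\textbf{Plan.} The proof is the mirror image of the argument for Corollary~\ref{cor:min-piles-qsort}, now leaning on Lemmas~\ref{lemma:ssort-lower-bound} and~\ref{lemma:ssort-bound-realz} rather than their queue counterparts. First I would dispatch the ``only if'' direction: suppose $\pile$ is the pile-assignment function of a stack-shuffle sort of $\perm$ using $\numpile$ piles. Since only the set of distinct values of $\pile$ matters (renumbering piles monotonically changes nothing about which shuffle it describes), we may assume without loss of generality that $\pile$ takes values in $[\numpile]$ with $\pile(1) = 1$, so that the total number of piles used is exactly $\pile(n) \leq \numpile$. Then Lemma~\ref{lemma:ssort-lower-bound} gives $\pile(n) \geq \descrunFn(\perm)$, and chaining the two inequalities yields $\descrunFn(\perm) \leq \numpile$.

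\medskip

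For the ``if'' direction, suppose $\descrunFn(\perm) \leq \numpile$. Take $\pile^*$ to be the cumulative descending runs function of~\eqref{eq:ssort-bound-realz}. By Lemma~\ref{lemma:ssort-bound-realz}, $\pile^*$ is a sort of $\perm$ on stacks, and by construction (as noted in that lemma's proof-sketch / in Lemma~\ref{lemma:ssort-lower-bound}) its maximum value is $\pile^*(n) = 1 + \ascsFn(\perm) = \descrunFn(\perm) \leq \numpile$, so it uses at most $\numpile$ piles. Hence a sorting stack shuffle on $\numpile$ piles exists. Combining the two directions completes the proof.

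\medskip

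\textbf{Main obstacle.} There is no real obstacle — every ingredient is already in place. The one point that deserves a sentence of care is the ``without loss of generality $\pile(1)=1$ and $\pile$ onto $[\pile(n)]$'' reduction used in the forward direction: collapsing unused pile indices is harmless because the stack-shuffle relation~\eqref{eq:sshuffle-order-identity} depends on $\pile$ only through the order it induces, not its literal values, so a monotone relabeling to a contiguous range $[k]$ with $k = \#\{\text{distinct values of }\pile\} \leq \numpile$ preserves the property of being a sort while making ``number of piles used'' equal to $\pile(n)$. Everything else is a direct appeal to the cited lemmas, exactly paralleling Corollary~\ref{cor:min-piles-qsort}.
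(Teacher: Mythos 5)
Your proof is correct and follows essentially the same route as the paper: the paper omits the stack-case proofs as mirrored exercises, but its proof of the queue analogue (Corollary~\ref{cor:min-piles-qsort}) is exactly this argument — the minimal construction~\eqref{eq:ssort-bound-realz} achieves the lower bound of Lemma~\ref{lemma:ssort-lower-bound} in all instances. Your extra remark about monotone relabeling of pile indices is a reasonable tidying of the ``without loss of generality $\pile(1)=1$'' convention already built into the lemmas, not a new idea.
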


\section{Sorting random permutations in piles}
\label{sec:sort-random}

\makecommand{\prob}{{\mathbb P}}
\makecommand{\expect}{{\mathbb E}}
\makecommand{\normdist}{{\mathcal N}}

In this brief section we consider the feasibility of sorting random permutations with pile shuffle.
In particular, we examine what is the probability that $\numpile$ piles can sort a permutation $\perm_n$
sampled uniformly from $\Perm{n}$.

According to Corollary~\ref{cor:min-piles-qsort} of the prequel,
$\numpile$ queues can sort a permutation $\perm$
if and only if $\numpile \geq
1 + \descfn(\perm)$;
by Corollary~\ref{cor:stack-minpiles},
$\numpile$ stacks can sort it if $m \geq
1 + \ascsFn(\perm)$.
The number of permutations of length $n$
with $k$ ascents (or descents)
is known as the Eulerian number
$\left\langle
{n \atop k}
\right\rangle$~\cite[p.~4]{bona_combinatorics_2004}.
Eulerian numbers are a cornerstone
of a deep and fascinating study
of permutation statistics~\cite{petersen2015eulerian,butler_stirling-euler-mahonian_2023,foata_denerts_1990}.
Then,
with $\perm_n$ uniformly distributed
over $\Perm{n}$,
$$
\prob\left[ \ascsFn(\perm_n) = k \right]
= 
\prob\left[ \descfn(\perm_n) = k \right]
= \frac{1}{n!}
\left\langle
{n \atop k}
\right\rangle
.
$$
It follows that
\begin{align*}
\prob\left[
\textrm{$\numpile$ queues (stacks) can sort $\perm_n$%
}
\right]
&=
\prob\left[
\descfn(\perm_n) \leq \numpile - 1
\right]
\\
&=
\frac{1}{n!}
\sum_{k=0}^{\numpile - 1}
\left\langle
{n \atop k}
\right\rangle
.
\end{align*}

For large $n$ this probability can be well approximated, because
it is known that Eulerian statistics follow a central limit theorem%
~\cite{hwang_asymptotic_2019}.
For example,
the statistic
$$\frac{
\descfn(\perm_n) - n/2
}{
\sqrt{n/12}
}$$
converges in distribution to the Normal distribution,
in the sense that
$$
\sup_{x\in\reals} \left|
\prob\left(
\frac{\descfn(\perm_n) - n/2}{\sqrt{n / 12}}
\leq x
\right) - \Phi(x)
\right|
\to 0
;
$$
here
$\Phi$ denotes the standard Normal distribution function.
That means both $\descfn(\perm_n)$ and $\ascsFn(\perm_n)$ concentrate around $n/2$, which unfortunately means that
for any given number $\numpile$ of piles,
the probability of sorting $\perm_n$ vanishes as the permutation length $n$ grows.

\section{Sorting with a pile shuffle on queues and stacks}
\label{sec:hetero-sort}

\makecommand{\pileTypeSingle}{x}
\makecommand{\newPileFn}{{\rm sep}}

\makecommand{\numQueue}{q}
\makecommand{\numStack}{s}

\makecommand{\updateQuota}{f}
\makecommand{\updateQueues}{{\updateQuota_{\rm Q}}}
\makecommand{\updateStacks}{{\updateQuota_{\rm S}}}

\makecommand{\dpstate}{{\mathbf x}}

In the prequel, all piles within a given shuffle had the same type, either all queues or all stacks.
Next we consider heterogeneous shuffle, where
the piles may be of different types within a single deal.
We provide:
(1) a mathematical shuffle relation, now on a heterogeneous mixture of queues and stacks,
(2) necessary and sufficient conditions for heterogeneous shuffle to sort an input permutation,
and
(3) efficient construction of a minimal sort, in linear time whenever feasible.

\subsection{Shuffle input-output relation}

In the heterogeneous case,
a shuffle is defined not only by the assignment $\pile$ of labels into piles (as before), but additionally
by an assignment $\pileType$ of types to those piles.
Using an alphabet $\arbpile = \{ \text{$\qsym$[ueue]}, \, \text{$\ssym$[tack]} \}$,
if $\pileType$ denotes the assignment of types to piles then
\[
\pileType(\pile) = \begin{cases}
\qsym, & \textrm{pile $\pile$ is a queue} \\
\ssym, & \textrm{pile $\pile$ is a stack}
.
\end{cases}
\]
Suppose that $\varperm \in \Perm{n}$ represents the result of
shuffling a deck $\perm \in \Perm{n}$ using
pile assignments $\pile$ and
type assignments
$\pileType$.
We denote the relation
\[
\shuffleOp \, \pileType \, \assignFn \, \perm = \varperm
.
\]

It is worth mentioning that the three modes of \emph{shelf shuffle} studied in~\cite{FULMAN2021112448}
map readily into this framework:
One can verify that
the \emph{strict} mode of shelf shuffle on $\numpile$ shelves is the queues-only case $\pileType = \qsym^\numpile$,
\emph{standard} mode is the alternating types case $\pileType = (\ssym\qsym)^\numpile$, and
\emph{lazy} mode is the case $\pileType = \qsym (\ssym\qsym)^\numpile$.

Once again the piles are put into natural order, however 
within each pile
the order is determined by the pile type.
We let $\pileTypeInd$ denote the indicator function for the stack pile type, \ie,
\begin{equation}
\label{eq:pile-type-indicator}
\pileTypeInd(\pile)
= \left[ \pileType(\pile) = \ssym \right]
= \begin{cases}
0	& \pileType(\pile) = \qsym
\\
1	& \pileType(\pile) = \ssym
.
\end{cases}
\end{equation}
Then combining the models for each pile type---%
\eqref{eq:qshuffle-order-identity} and~\eqref{eq:sshuffle-order-identity}---%
we obtain
\begin{align}
\label{eq:pile-shuffle-model-hetero}
\shuffleOp \, \pileType \, \assignFn \, \perm = \varperm
\quad \iff \quad
\indord_s \, \varperm(s)
\, = \,
\indord_s \, \Big(
	\pile(s), \,
	(-1)^{\pileTypeInd(\assignFn(s))} \perm(s)
\Big)
.
\end{align}

\subsection{Sorting shuffles and feasibility}
The condition for a heterogeneous shuffle to sort its input permutation follows the familiar pattern
of~Lemma~\ref{lemma:sort-with-queues}:
\begin{lemma}[Heterogeneous sort]
\label{lemma:hetero-sort}
Let $(\prec_\qsym, \prec_\ssym) = (<, >)$,
\ie,
$i \prec_\qsym j \iff i < j$, and
$i \prec_\ssym j \iff i > j$.
A heterogeneous shuffle $(\pileType, \pile)$ sorts permutation $\perm\in\Perm{n}$
($\shuffleOp \, \pileType \, \assignFn \, \perm = \perm_I$)
if and only if
\begin{equation}
\label{eq:hetero-sort}
\pile(s+1)
\geq
\pile(s) + \left[
	\perm(s+1) \prec_{ \pileType(\pile(s)) } \perm(s)
\right]
\qquad \forall s \in [n-1]
.
\end{equation}
\end{lemma}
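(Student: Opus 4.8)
The plan is to mimic the proof of Lemma~\ref{lemma:sort-with-queues} essentially verbatim, but carrying the pile type of the \emph{lower-indexed} pile through the case analysis. First I would specialize the heterogeneous shuffle relation~\eqref{eq:pile-shuffle-model-hetero} to the sort case $\varperm = \perm_I$, just as~\eqref{eq:sort-order-identity-var} did for queues: since $\perm_I(s) = s$ makes $\indord_s\, \varperm(s)$ the natural order, sorting holds if and only if
\[
s < t \iff \Big( \pile(s),\, (-1)^{\pileTypeInd(\pile(s))}\perm(s) \Big) < \Big( \pile(t),\, (-1)^{\pileTypeInd(\pile(t))}\perm(t) \Big),
\]
with the lexicographic order on the right. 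By transitivity of that order (the same observation used to pass from~\eqref{eq:qsort-order-var-logical} to~\eqref{eq:qsort-cond-adj}), it suffices to check this for consecutive $s$, $t = s+1$, so the condition becomes
\[
\Big( \pile(s),\, (-1)^{\pileTypeInd(\pile(s))}\perm(s) \Big) < \Big( \pile(s+1),\, (-1)^{\pileTypeInd(\pile(s+1))}\perm(s+1) \Big) \qquad \forall s \in [n-1].
\]

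Next I would equate this adjacent-pair condition with~\eqref{eq:hetero-sort} by the same two-case split as in Lemma~\ref{lemma:sort-with-queues}, comparing the lexicographic pairs. The key point is that the second coordinate only matters when the first coordinates are \emph{equal}, i.e.\ when $\pile(s) = \pile(s+1)$; but in that case $\pileTypeInd(\pile(s)) = \pileTypeInd(\pile(s+1))$, so the sign applied to both $\perm(s)$ and $\perm(s+1)$ is the same, namely $(-1)^{\pileTypeInd(\pile(s))}$. Thus the pair inequality reduces to: either $\pile(s+1) > \pile(s)$ (strictly), or $\pile(s+1) = \pile(s)$ and $(-1)^{\pileTypeInd(\pile(s))}\perm(s) < (-1)^{\pileTypeInd(\pile(s))}\perm(s+1)$. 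Observing that $(-1)^{\pileTypeInd(\pile(s))}\perm(s) < (-1)^{\pileTypeInd(\pile(s))}\perm(s+1)$ is exactly $\perm(s) \prec_{\pileType(\pile(s))} \perm(s+1)$ — since $\prec_\qsym = <$ corresponds to sign $+1$ and $\prec_\ssym = >$ to sign $-1$ — the condition says $\pile$ is non-decreasing and must increase strictly whenever $\neg\big(\perm(s)\prec_{\pileType(\pile(s))}\perm(s+1)\big)$, i.e.\ whenever $\perm(s+1)\prec_{\pileType(\pile(s))}\perm(s)$ (using that $\prec_\qsym,\prec_\ssym$ are total orders so the negation of one strict inequality on distinct values is the other). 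Writing that as an indicator gives precisely~\eqref{eq:hetero-sort}.

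The one subtlety I would be careful about — and it is where the argument differs from the homogeneous case — is the appearance of $\pileType(\pile(s))$ rather than $\pileType(\pile(s+1))$ inside the indicator. This is the \emph{only} natural obstacle: when $\pile(s+1) > \pile(s)$ the new pile's type is irrelevant (the indicator's value doesn't affect the inequality, since a strict increase in the first coordinate already satisfies it regardless of whether the indicator is $0$ or $1$), and when $\pile(s+1) = \pile(s)$ the two types coincide, so writing $\pileType(\pile(s))$ vs.\ $\pileType(\pile(s+1))$ makes no difference in either case. I would state this coincidence explicitly so the reader sees that~\eqref{eq:hetero-sort} is well-posed and agrees with the pair condition on the nose; everything else is the same routine case check as in Lemma~\ref{lemma:sort-with-queues}, specializing to $\prec_\qsym$ (recovering~\eqref{eq:qsort-cond}) or $\prec_\ssym$ (recovering~\eqref{eq:ssort-cond}) pile by pile.
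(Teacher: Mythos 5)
Your proposal is correct and follows essentially the same route the paper takes: its own argument is only a sketch that says to repeat the strategy of Lemma~\ref{lemma:sort-with-queues} starting from~\eqref{eq:pile-shuffle-model-hetero}, reduce to adjacent pairs by transitivity, and split into cases by pile type, which is exactly what you do. Your explicit observation that the type $\pileType(\pile(s))$ only matters when $\pile(s+1)=\pile(s)$ (where the two types coincide) correctly fills in the detail the paper leaves implicit.
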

\begin{proof}[Proof Sketch]
The result is still proven by the same strategy as that of Lemma~\ref{lemma:sort-with-queues},
but starting from~\eqref{eq:pile-shuffle-model-hetero} rather than~\eqref{eq:sort-order-identity-var}.
Each of the previous cases must now be considered separately
when $\pileType(\pile(s)) = \qsym$ and when $\pileType(\pile(s)) = \ssym$.
However, the additional complexity is minor, and we omit a full proof.
\end{proof}

Note~\eqref{eq:hetero-sort} generalizes the forms of~\eqref{eq:qsort-cond} and~\eqref{eq:ssort-cond}.
Therefore, using essentially the same technique as before,
we squeeze the inequality recurrence~\eqref{eq:hetero-sort} to obtain bounds.
\begin{lemma}
\label{lemma:hetero-func-bound}
Given a pile shuffle $(\pileType, \pile)$
of a permutation $\perm \in [n]!$,
starting w.l.g. from $\pile(1) = 1$,
let 
$\pile^*$ be defined by
\begin{equation}
\label{eq:min-piles-mixed}
\begin{cases}
\pile^*(1) = 1
\\
\pile^*(s+1) = \pile^*(s) + \left[
	\perm(s+1) \prec_{ \pileType(\pile^*(s)) } \perm(s)
\right]	& \forall s \in [n-1]
.
\end{cases}
\end{equation}
If 
$(\pileType, \pile)$
sorts 
$\perm$,
%
then $\pile(s) \geq \pile^*(s)$ for all $s \in [n]$.
\end{lemma}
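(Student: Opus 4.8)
The plan is to prove the bound by induction on $s$, combining the sort condition~\eqref{eq:hetero-sort} of Lemma~\ref{lemma:hetero-sort} with the defining recurrence~\eqref{eq:min-piles-mixed} for $\pile^*$. The base case is immediate: $\pile(1) = 1 = \pile^*(1)$, the former by the stated normalization and the latter by construction.

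For the inductive step I would assume $\pile(s) \geq \pile^*(s)$ and deduce $\pile(s+1) \geq \pile^*(s+1)$, splitting into two cases according to whether the inductive hypothesis is tight. If $\pile(s) > \pile^*(s)$, i.e. $\pile(s) \geq \pile^*(s) + 1$, then because $(\pileType,\pile)$ sorts $\perm$, \eqref{eq:hetero-sort} gives $\pile(s+1) \geq \pile(s) \geq \pile^*(s) + 1 \geq \pile^*(s+1)$, where the last inequality holds since the increment added to $\pile^*(s)$ in~\eqref{eq:min-piles-mixed} is an indicator and hence at most $1$. If instead $\pile(s) = \pile^*(s)$, then in particular $\pileType(\pile(s)) = \pileType(\pile^*(s))$, so the indicator term $\left[\perm(s+1) \prec_{\pileType(\pile(s))} \perm(s)\right]$ appearing in~\eqref{eq:hetero-sort} is literally the same as the one appearing in~\eqref{eq:min-piles-mixed}; hence $\pile(s+1) \geq \pile(s) + \left[\perm(s+1)\prec_{\pileType(\pile(s))}\perm(s)\right] = \pile^*(s) + \left[\perm(s+1)\prec_{\pileType(\pile^*(s))}\perm(s)\right] = \pile^*(s+1)$.

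The one genuine subtlety — and the reason the plain telescoping argument of Lemma~\ref{lemma:sort-lower-bound} does not transcribe verbatim — is that the recurrence~\eqref{eq:min-piles-mixed} reads the pile type at index $\pile^*(s)$ rather than at $\pile(s)$, so a priori the "cost" of a given adjacency (ascent versus descent) could differ between the two sequences. I expect this to be the only point requiring care, and the case split above is designed precisely to absorb it: when the two pile indices disagree, $\pile$ is already strictly ahead, with a full unit of slack available to dominate the at-most-one increment of $\pile^*$; and when they agree, the pile types — hence the increments — agree as well, so the recurrences move in lockstep. Everything else is routine bookkeeping.
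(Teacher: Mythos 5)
Your proof is correct and follows essentially the same route as the paper's: induction on $s$ with the same case split on whether $\pile(s) = \pile^*(s)$ (types, hence increments, coincide) or $\pile(s) \geq \pile^*(s) + 1$ (one unit of slack dominates the at-most-one increment of $\pile^*$, which is exactly the paper's ``bookend'' step). No gaps; the subtlety you flag about the type being read at $\pile^*(s)$ versus $\pile(s)$ is precisely what the paper's case analysis handles.
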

\begin{proof}
The proof is by induction.
The base case $\pile(1) \geq \pile^*(1) = 1$ is by definition.
We may assume as the inductive hypothesis that
for a given $s \in [n-1]$,
$\pile(s) \geq \pile^*(s)$.
If 
$\pile(s) = \pile^*(s)$, then we obtain $\pile(s+1) \geq \pile^*(s+1)$ immediately, since
the right-hand side of~\eqref{eq:hetero-sort} becomes $\pile^*(s+1)$ by definition.
Otherwise,
$\pile(s) \geq \pile^*(s) + 1$, which we bookend with
two uniform bounds,
$\pile(s+1) \geq \pile(s)$ and
$\pile^*(s) + 1 \geq \pile^*(s+1)$,
to obtain the result.
\end{proof}

\begin{corollary}
\label{cor:min-files-mixed}
\eqref{eq:min-piles-mixed} obtains a minimal sort of $\perm$ on $\pileType$, if a sort on $\pileType$ exists.
\end{corollary}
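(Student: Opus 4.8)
The goal is Corollary~\ref{cor:min-files-mixed}: the recurrence~\eqref{eq:min-piles-mixed} produces a valid sort on $\pileType$ whenever any sort on $\pileType$ exists, and it uses the fewest piles among all such sorts. There are two things to check: \emph{correctness} (the $\pile^*$ defined by~\eqref{eq:min-piles-mixed} actually satisfies the sort condition~\eqref{eq:hetero-sort}), and \emph{optimality} (no sort on $\pileType$ uses fewer piles, i.e. $\pile^*(n)$ is minimal, and more strongly $\pile^*$ is pointwise dominated by every sort). The key observation is that Lemma~\ref{lemma:hetero-func-bound} already does almost all of the work: it shows that \emph{if} some sort $(\pileType,\pile)$ exists, then $\pile(s) \geq \pile^*(s)$ for all $s$, which is exactly the pointwise-domination half of optimality. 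So the remaining burden is correctness of $\pile^*$ itself.

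\emph{First}, I would verify that $\pile^*$ as defined in~\eqref{eq:min-piles-mixed} satisfies~\eqref{eq:hetero-sort}. This is essentially immediate: rewriting~\eqref{eq:hetero-sort} for the candidate $\pile^*$, the condition at index $s$ reads $\pile^*(s+1) \geq \pile^*(s) + [\perm(s+1)\prec_{\pileType(\pile^*(s))}\perm(s)]$, and the recurrence defines $\pile^*(s+1)$ to be \emph{exactly} that right-hand side, so the inequality holds with equality at every $s\in[n-1]$. Hence by Lemma~\ref{lemma:hetero-sort}, $(\pileType,\pile^*)$ sorts $\perm$ --- but one subtlety must be addressed: the recurrence for $\pile^*$ only ever references $\pileType$ at the values $\pile^*(1),\pile^*(2),\ldots$, which by construction lie in $\{1,2,\ldots,\pile^*(n)\}$ and are non-decreasing, so $\pileType$ need only be defined on that initial segment of piles; if $\pileType$ is given on a larger range that is harmless, and if a "minimal" $\pileType$ is desired one restricts to the range actually used.

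\emph{Second}, for optimality I would invoke Lemma~\ref{lemma:hetero-func-bound} directly: given any sort $(\pileType,\pile)$ of $\perm$ on the \emph{same} type assignment $\pileType$ (normalized so $\pile(1)=1$, which is w.l.g. since shifting all pile indices down by a constant does not change the induced order in~\eqref{eq:pile-shuffle-model-hetero}), the lemma gives $\pile(s)\geq\pile^*(s)$ for every $s\in[n]$, and in particular the number of piles $\pile(n)\geq\pile^*(n)$. Combined with the first part --- that $\pile^*$ is itself a sort --- this shows $\pile^*$ achieves the minimum number of piles over all sorts on $\pileType$, and indeed does so pointwise, which is the strongest possible sense of "minimal." The existence hypothesis ("if a sort on $\pileType$ exists") is exactly what makes the statement non-vacuous: $\pile^*$ is always well-defined by the recurrence, but the claim that it is a \emph{sort} needs no hypothesis at all (part one), while the hypothesis is only invoked to make "minimal" meaningful relative to a non-empty competitor set.

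\emph{Main obstacle.} Honestly there is very little obstacle here --- the corollary is a packaging of Lemmas~\ref{lemma:hetero-sort} and~\ref{lemma:hetero-func-bound}. The one genuinely careful point is the well-definedness / domain issue flagged above: because the type of pile $p$ feeds back into which piles get used, one must be sure the recurrence~\eqref{eq:min-piles-mixed} is not "circular" --- and it is not, since $\pile^*(s+1)$ depends only on $\pile^*(s)$ and hence, by induction, only on $\pileType$ restricted to $\{1,\ldots,s\}$, values already determined. A secondary point worth a sentence is that "minimal" should be interpreted as "minimal among sorts using the \emph{given} type pattern $\pileType$," not among all heterogeneous sorts with freely chosen types; with $\pileType$ fixed, Lemma~\ref{lemma:hetero-func-bound} delivers the pointwise bound and the proof is complete.
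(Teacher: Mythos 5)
Your proposal follows the same route as the paper's own proof: observe that $\pile^*$ satisfies \eqref{eq:hetero-sort} with equality, hence is a sort by Lemma~\ref{lemma:hetero-sort}, and conclude minimality from the pointwise bound of Lemma~\ref{lemma:hetero-func-bound}. The one place you go astray is the role of the hypothesis ``if a sort on $\pileType$ exists.'' You assert that $\pile^*$ is always well-defined and is a sort with no hypothesis at all, the hypothesis serving only to make ``minimal'' non-vacuous. In the paper's setting $\pileType$ is a type schedule that may be a \emph{finite} sequence, and the recurrence \eqref{eq:min-piles-mixed} can fail to have a solution precisely when it would demand a pile index beyond the length of $\pileType$ --- the paper says this explicitly just after the corollary (``if $\pileType$ is a finite sequence and too short, then there can be no solution to \eqref{eq:hetero-sort} either''). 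The existence hypothesis is therefore what secures well-definedness, not merely non-vacuity: given a sort $(\pileType,\pile)$, the induction of Lemma~\ref{lemma:hetero-func-bound} runs interleaved with the recurrence, since $\pile^*(s)\le\pile(s)$ keeps $\pile^*(s)$ inside the domain of $\pileType$, so $\pileType(\pile^*(s))$ and hence $\pile^*(s+1)$ are defined at each step; this is exactly the content of the second sentence of the paper's proof. Your own remark on the domain issue resolves it the wrong way (by assuming $\pileType$ is defined wherever the recurrence asks, or by restricting to the range used), which silently changes the statement being proved. With the hypothesis deployed where the paper deploys it, your two steps coincide with the published argument.
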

\begin{proof}
If \eqref{eq:min-piles-mixed} has a solution,
then
$\pile^*$ is a sort because it satisfies~\eqref{eq:hetero-sort} everywhere, and
the number $\pile^*(n)$ of piles used is the minimum
by Lemma~\ref{lemma:hetero-func-bound}.
If any sort $(\pileType, \pile)$ of $\perm$ on $\pileType$ exists, then
$\pile^*$ is well defined.
\end{proof}

Lemma~\ref{lemma:hetero-func-bound} and its corollary are fundamental results of this paper, and
will be relied upon throughout the investigations of the sequel.
They reveal the pivotal role that the type assignments $\pileType$ plays---%
it fully specifies a minimal shuffle $\pile^*$---%
in deciding whether sorting a permutation on a sequence of piles is feasible.
If~\eqref{eq:min-piles-mixed} has no solution---\ie, if $\pileType$ is a finite sequence and too short---then
there can be no solution to~\eqref{eq:hetero-sort} either.
Conversely, if a solution exists, then \eqref{eq:min-piles-mixed} obtains one efficiently in a linear scan.
%
\begin{defn}
\label{defn:sort-with-types}
We say a type schedule $\pileType$ (an assignment of pile types) sorts a permutation $\perm$ if
there exists $\pile$ such that $(\pileType, \pile)$ sorts $\perm$.
\end{defn}
%

\makecommand{\sortScenario}{\pileAssignments}

\subsection{Dealer's choice pile shuffle sort}
\label{subsec:dealer-choice-single}

If the dealer is free
to choose the type of each of $\numpile$ piles arbitrarily during the deal,
then
a given deck may be sorted if and only if there is a type assignment $\pileType \in \arbpile^\numpile$
which sorts it.
%
In principle one could decide feasibility by checking each potential assignment until
they find one for which~\eqref{eq:min-piles-mixed} has a solution.
However,
while each check
can be done in linear time,
the search space $\arbpile^\numpile$ is exponential in the number of piles ($2^\numpile$ possible assignments).
%
That could be problematic in practice since, for example,
we have found that 
sorting a random permutation of length $n$ typically requires a non-trivial fraction of $n$ piles.

Fortunately,
a minimal sort can be obtained, if one exists,
in time that is linear in the permutation length,
by
combining~\eqref{eq:min-piles-mixed} with
a greedy solution for choosing pile types
$\pileType^*$:
If label $s$ begins a new pile, then
we should choose the new pile's type so the next item ($s+1$) may be placed there also.
Mathematically,
\begin{equation}
\label{eq:pile-type-look-ahead}
\text{$s = 1$ or $\pile^*(s) > \pile^*(s-1)$}
\implies
\pileType^*(\pile^*(s)) = \begin{cases}
\qsym & \perm(s+1) > \perm(s) \\
\ssym & \perm(s+1) < \perm(s)
.
\end{cases}
\end{equation}
Doing otherwise cannot be advantageous, as we demonstrate next.

\begin{lemma}
The pile shuffle $(\pileType^*, \pile^*)$
defined by~\eqref{eq:min-piles-mixed} and~\eqref{eq:pile-type-look-ahead}
is a minimal sort of input permutation $\perm$:
For any sort $(\pileType, \pile)$ with $\pile(1) = 1$,
$\pile(n) \geq \pile^*(n)$.
\end{lemma}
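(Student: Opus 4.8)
The plan is to show that the greedy type schedule $\pileType^*$ is, among all type schedules, one that minimizes the number of piles used by the associated minimal assignment $\pile^*$ from~\eqref{eq:min-piles-mixed}. Concretely, I would prove the stronger pointwise statement: for \emph{any} type schedule $\pileType$ and the corresponding $\pile^*_{\pileType}$ defined by~\eqref{eq:min-piles-mixed}, we have $\pile^*_{\pileType}(s) \geq \pile^*(s)$ for all $s \in [n]$, where $\pile^* = \pile^*_{\pileType^*}$ is the greedy one. Combining this with Lemma~\ref{lemma:hetero-func-bound} (which says any sort $(\pileType,\pile)$ with $\pile(1)=1$ satisfies $\pile(s) \geq \pile^*_{\pileType}(s)$) immediately yields $\pile(n) \geq \pile^*_{\pileType}(n) \geq \pile^*(n)$, which is the claim. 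So the whole argument reduces to the pointwise domination of the greedy $\pile^*$ by every other $\pile^*_{\pileType}$.

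The pointwise domination I would prove by induction on $s$, and the key bookkeeping device is to track not just the value $\pile^*(s)$ but also the type $\pileType^*(\pile^*(s))$ of the pile that label $s$ currently sits in. The inductive claim should be: either (a) $\pile^*_{\pileType}(s) > \pile^*(s)$, or (b) $\pile^*_{\pileType}(s) = \pile^*(s)$ \emph{and} the greedy pile holding $s$ has the "right" type for the transition out of $s$ --- that is, when $\pile^*(s)$ was opened (by $s$ itself or some earlier label that began that pile), $\pileType^*$ chose $\qsym$ if $\perm(s+1) > \perm(s)$ would keep $s+1$ in the same pile, and so on per~\eqref{eq:pile-type-look-ahead}. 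Actually the cleanest invariant is: in case (b), the greedy increment $\left[\perm(s+1) \prec_{\pileType^*(\pile^*(s))} \perm(s)\right]$ is $\le$ the increment $\left[\perm(s+1) \prec_{\pileType(\pile^*_{\pileType}(s))} \perm(s)\right]$ taken by the other schedule. In the inductive step, if we are in case (a) the gap of at least $1$ is preserved because $\pile^*$ increments by at most $1$ while $\pile^*_{\pileType}$ is non-decreasing; if we are in case (b), either both stay equal (and I must re-verify the type invariant at $s+1$, using that $\pile^*(s+1) = \pile^*(s)$ means $s+1$ is in the \emph{same} greedy pile whose type was already fixed favorably), or $\pile^*_{\pileType}$ increments while $\pile^*$ does not, moving us into case (a).

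The subtle point --- and what I expect to be the main obstacle --- is re-establishing the type invariant in the subcase where $\pile^*(s+1) = \pile^*(s)$, i.e. $s+1$ does \emph{not} open a new pile. Then $\pileType^*$ does not get to re-choose a type at step $s+1$; we are stuck with whatever type was assigned when this pile was opened. I need to argue that this "stale" type is still the favorable one for the transition out of $s+1$, which is not automatic: it requires that a greedy pile, once opened with a given type, never forces an unnecessary increment later. The resolution is that $\pile^*(s+1) = \pile^*(s)$ forces $\left[\perm(s+1) \prec_{\pileType^*(\pile^*(s))}\perm(s)\right] = 0$; combined with the look-ahead rule~\eqref{eq:pile-type-look-ahead} applied at whichever label opened this pile, one checks that the type is precisely the one matching the direction of the step $\perm(s+1)$ vs.\ $\perm(s)$ --- so greedy never "wastes" a pile, and the invariant propagates. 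I would verify this by a short case split on the sign of $\perm(s+1) - \perm(s)$ and the pile type, exactly parallel to the case analysis in Lemma~\ref{lemma:sort-with-queues}. Once that lemma-internal invariant is nailed down, the rest is the routine telescoping/monotonicity bookkeeping already used in the proof of Lemma~\ref{lemma:hetero-func-bound}.
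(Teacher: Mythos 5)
Your proposal is correct, but it takes a genuinely different route from the paper. The paper argues by in-place refinement: starting from an arbitrary sort $(\pileType, \pile)$, it sweeps $s = 1, \ldots, n$, at each step lowering $\pile(s)$ to the value dictated by~\eqref{eq:min-piles-mixed} (justified by Lemma~\ref{lemma:hetero-func-bound}) and overwriting the type at that pile per~\eqref{eq:pile-type-look-ahead}, claiming that each edit preserves validity of~\eqref{eq:hetero-sort} and never increases $\pile(n)$; since the sweep converges to $(\pileType^*, \pile^*)$ from any starting sort, the greedy shuffle is minimal. You instead factor the problem through the canonical per-schedule assignments: you prove the pointwise ``greedy stays ahead'' domination $\pile^*_{\pileType}(s) \geq \pile^*(s)$ for \emph{every} type schedule $\pileType$, and then compose with Lemma~\ref{lemma:hetero-func-bound} to pass from an arbitrary sort $(\pileType,\pile)$ to $\pile^*_{\pileType}$. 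Your inductive invariant is the right one, and you correctly isolate the only delicate point, the ``stale'' type of a greedy pile when $s+1$ does not open a new pile: there the zero increment at step $s$ pins the greedy pile's type to the direction of the step $\perm(s)\to\perm(s+1)$, and (a point worth stating explicitly) in the equal-value, equal-increment subcase the competitor's pile type is pinned to the same direction for the same reason, so the two schedules face identical increments at the next step; the remaining transitions (both increment, or re-entering equality from a strict gap) are handled by the look-ahead rule guaranteeing a zero increment out of any freshly opened greedy pile. The trade-off: the paper's argument is shorter but leaves the preservation-of-validity claims for its two edit operations unverified, whereas your route is more explicit about the one subtle step and in fact yields a slightly stronger conclusion (prefix-by-prefix domination of every fixed-type minimal assignment, not just minimality of the final pile count).
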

\begin{proof}
Notably,
the recurrence equations can be applied in the forward direction
to refine any initial sort, \ie, solution $(\pileType, \pile)$ to~\eqref{eq:hetero-sort}.
For each position $s \in [n]$ in order, we do the following:
First, change $\pile(s)$ to $\pile^*(s)$, potentially reducing it ($\pile(s) \geq \pile^*(s)$ by Lemma~\ref{lemma:hetero-func-bound}).
Next, 
modify $\pileType$ at $\pile^*(s)$ if needed so that~\eqref{eq:pile-type-look-ahead} holds there.
Neither operation can cause~\eqref{eq:hetero-sort} to become violated, so
the pair $(\pileType, \pile)$ remains a valid sort after each iteration of this process.
Meanwhile, the number of piles $\pile(n)$ can never increase.

Since the pair of equations fully specifies $(\pileType^*, \pile^*)$, then
by the end of the procedure
we obtain that solution regardless of starting sort.
In particular, since we obtain the result starting from any minimal sort---and 
without introducing additional piles---then
$(\pileType^*, \pile^*)$ is itself minimal.
\end{proof}

Other formulations of the Dealer's choice pile shuffle are potentially of interest.
In particular, it could be natural to incorporate
separate capacities, \ie inequality constraints, on
the numbers of queues, stacks, and total piles available for shuffle.
Other formulations do not yield as readily to the greedy strategy for arbitrary choice,
and this paper leaves such investigations for future work.

\subsection{Reducing arbitrary shuffle to sort}

Our discussion so far has pertained exclusively to the use of pile shuffle to sort a deck.
However, any desired shuffle can be framed as a sort,
through a transformation that is the subject of this brief section.

\begin{prop}
\label{lemma:general-shuffle-queues}
If $\shuffleOp \, \pileType \, \pile \, \perm = \varperm$, then
$\shuffleOp
	\, \pileType
	\, \left( \pile(\perm') \right)
	\, \left( \perm(\perm') \right)
= \varperm(\perm')$
for any third permutation $\perm' \in \Perm{n}$.
\end{prop}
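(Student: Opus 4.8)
The plan is to translate everything into the induced-order language of~\eqref{eq:pile-shuffle-model-hetero} and then invoke one elementary fact: an equality of induced orders on $[n]$ is preserved under precomposition by any bijection of $[n]$. By~\eqref{eq:pile-shuffle-model-hetero}, the hypothesis $\shuffleOp \, \pileType \, \pile \, \perm = \varperm$ is equivalent to
\[
\indord_s \, \varperm(s) \;=\; \indord_s \, \Big( \pile(s), \, (-1)^{\pileTypeInd(\pile(s))} \perm(s) \Big),
\]
where both sides are honest orders because $\varperm$ is a bijection and, since $\perm$ is a bijection, $s \mapsto \big(\pile(s), (-1)^{\pileTypeInd(\pile(s))}\perm(s)\big)$ is injective (two indices giving the same pair must agree in their $\perm$-values). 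Reading~\eqref{eq:pile-shuffle-model-hetero} in the forward direction for the claimed shuffle, and noting that $\pile\circ\perm'$ has the same range as $\pile$ (so $\pileType$ is still defined where needed) and that $s\mapsto\big(\pile(\perm'(s)), (-1)^{\pileTypeInd(\pile(\perm'(s)))}\perm(\perm'(s))\big)$ is again injective, it suffices to establish
\[
\indord_s \, \varperm(\perm'(s)) \;=\; \indord_s \, \Big( \pile(\perm'(s)), \, (-1)^{\pileTypeInd(\pile(\perm'(s)))} \perm(\perm'(s)) \Big).
\]

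First I would record the precomposition lemma: if $f_1\colon[n]\to Y_1$ and $f_2\colon[n]\to Y_2$ are injections with $\indord_s f_1(s) = \indord_s f_2(s)$, and $\perm'\in\Perm{n}$, then $\indord_s f_1(\perm'(s)) = \indord_s f_2(\perm'(s))$. This is immediate from~\eqref{eq:induced-order-equality}: for $s,s'\in[n]$, put $t=\perm'(s)$, $t'=\perm'(s')$; then $f_1(\perm'(s)) \prec_1 f_1(\perm'(s')) \iff f_1(t)\prec_1 f_1(t') \iff f_2(t)\prec_2 f_2(t') \iff f_2(\perm'(s))\prec_2 f_2(\perm'(s'))$, using the hypothesis in the middle, and $f_j\circ\perm'$ is injective since $\perm'$ is a bijection.

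Then I would apply this lemma with $f_1=\varperm$ and $f_2 = \big(s\mapsto(\pile(s),(-1)^{\pileTypeInd(\pile(s))}\perm(s))\big)$: the composites $f_1\circ\perm'$ and $f_2\circ\perm'$ are exactly the two sides of the displayed goal, with $\pile(\perm')$ playing the role of the pile-assignment function, $\perm(\perm')$ the permutation, and $\pileType$ unchanged. Invoking~\eqref{eq:pile-shuffle-model-hetero} once more, now in reverse, yields $\shuffleOp \, \pileType \, (\pile(\perm')) \, (\perm(\perm')) = \varperm(\perm')$. There is no genuine obstacle: the only care needed is the bookkeeping just flagged — injectivity of the composite pile-plus-signed-permutation map, and the observation that $\pileType$ need not change because $\pile$ and $\pile\circ\perm'$ share a range. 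Conceptually, precomposition by $\perm'$ merely relabels the cards, which cannot affect a relation defined purely in terms of induced orders.
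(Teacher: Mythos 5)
Your proposal is correct. It differs from the paper's own proof mainly in register: the paper argues informally by a double-labeling device---each card carries a primary label $s$ and a secondary label $s' = (\perm')^{-1}(s)$, and one observes that $\perm(\perm')$, $\pile(\perm')$, and $\varperm(\perm')$ all describe the very same physical deal and collection, just read off through the secondary labels---so the conclusion is that the ``same'' shuffle has been performed on a relabeled deck. You instead stay entirely inside the formal model: you take the defining order identity~\eqref{eq:pile-shuffle-model-hetero}, prove the elementary fact that an equality of induced orders on $[n]$ is stable under precomposition with a bijection of $[n]$, and apply it with $f_1 = \varperm$ and $f_2(s) = \bigl( \pile(s), (-1)^{\pileTypeInd(\pile(s))}\perm(s) \bigr)$, then read~\eqref{eq:pile-shuffle-model-hetero} backwards for the composite data $(\pileType, \pile(\perm'), \perm(\perm'))$. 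The underlying insight is the same---precomposition by $\perm'$ is a relabeling, and the shuffle relation is defined purely through induced orders---but your version makes explicit the bookkeeping the paper leaves implicit (injectivity of the signed pair map, well-definedness of $\pileType$ on the range of $\pile(\perm')$, and the quantifier swap $t = \perm'(s)$), so it is the more self-contained derivation from the paper's own equations, at the cost of being less vivid about why the statement is ``obvious'' physically.
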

\begin{proof}
Given a permutation $\perm' \in [n]!$,
we can imagine giving each element of the deck two labels:
a primary label $s \in [n]$, and a secondary label $s' = (\perm')^{-1}(s)$.
Then both $\perm(s)$ and $\perm(\perm'(s'))$ describe the position of the element in the starting deck.
Given a shuffle $(\pileType, \pile)$ by the primary label,
both $\pile(s)$ and $\pile(\perm'(s'))$ describe the same pile that the element is assigned to,
whose type is $\pileType(\pile)$.
Finally,
both $\varperm(s)$ and $\varperm(\perm'(s'))$ describe the position of the element in the resulting deck.
We have effectively shuffled a deck $\perm(\perm')$
with $(\pileType, \pile(\perm'))$,
and
obtained $\varperm(\perm')$.
\end{proof}

\begin{corollary}
\label{cor:general-shuffle-queues-sort}
$\shuffleOp \, \pileType \, \pile \, \perm = \varperm$ if and only if
$\left( \pileType, \pile(\varperm^{-1}) \right)$ is a pile shuffle sort
of $\perm(\varperm^{-1})$.
\end{corollary}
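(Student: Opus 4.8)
The plan is to deduce the corollary directly from Proposition~\ref{lemma:general-shuffle-queues} by instantiating its ``third permutation'' $\perm'$ appropriately and then simplifying the resulting compositions using the identities $\varperm^{-1}(\varperm) = \varperm(\varperm^{-1}) = \perm_I$ together with the fact that $\perm_I$ is neutral for composition (so $\pile(\perm_I) = \pile$, $\perm(\perm_I) = \perm$, and $\perm_I(\varperm) = \varperm$). Recall also that ``$(\pileType, \pile')$ is a pile shuffle sort of $\rho$'' is by definition the statement $\shuffleOp \, \pileType \, \pile' \, \rho = \perm_I$.

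For the forward implication I would start from $\shuffleOp \, \pileType \, \pile \, \perm = \varperm$ and apply Proposition~\ref{lemma:general-shuffle-queues} with $\perm' = \varperm^{-1}$. This immediately gives $\shuffleOp \, \pileType \, \big(\pile(\varperm^{-1})\big) \, \big(\perm(\varperm^{-1})\big) = \varperm(\varperm^{-1}) = \perm_I$, which is precisely the assertion that $\big(\pileType, \pile(\varperm^{-1})\big)$ sorts $\perm(\varperm^{-1})$.

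For the converse I would start from the hypothesis $\shuffleOp \, \pileType \, \big(\pile(\varperm^{-1})\big) \, \big(\perm(\varperm^{-1})\big) = \perm_I$ and apply Proposition~\ref{lemma:general-shuffle-queues} again, this time to \emph{this} shuffle (pile assignment $\pile(\varperm^{-1})$, input $\perm(\varperm^{-1})$, output $\perm_I$), with the third permutation $\perm' = \varperm$. The proposition yields $\shuffleOp \, \pileType \, \big(\pile(\varperm^{-1}(\varperm))\big) \, \big(\perm(\varperm^{-1}(\varperm))\big) = \perm_I(\varperm)$, and collapsing $\varperm^{-1}(\varperm) = \perm_I$ on the left and $\perm_I(\varperm) = \varperm$ on the right recovers $\shuffleOp \, \pileType \, \pile \, \perm = \varperm$, completing the equivalence.

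There is no real obstacle here: the content is entirely in Proposition~\ref{lemma:general-shuffle-queues}, and the corollary is just that proposition read with $\perm' = \varperm^{-1}$, packaged as an ``if and only if''. The only point requiring a little care is the bookkeeping of iterated right-composition --- checking that $\pile(\varperm^{-1})$ composed on the right with $\varperm$ is $\pile\big(\varperm^{-1}(\varperm)\big) = \pile$, and similarly for $\perm$ --- but this is routine once one is consistent about the embedding convention and the meaning of $f(g)$ fixed in the preliminaries.
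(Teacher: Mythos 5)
Your proof is correct and follows essentially the same route as the paper, which disposes of the corollary in one line as the special case $\perm' = \varperm^{-1}$ of Proposition~\ref{lemma:general-shuffle-queues}. Your explicit treatment of the converse direction (re-applying the proposition with $\perm' = \varperm$ and collapsing $\varperm^{-1}(\varperm) = \perm_I$) merely spells out a step the paper leaves implicit.
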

\begin{proof}
This is the special case of Lemma~\ref{lemma:general-shuffle-queues}, with $\perm' = \varperm^{-1}$.
\end{proof}

\section{Sorting in multiple rounds of pile shuffle}
\label{sec:multi-round}

\makecommand{\vrpile}{{\hat\pile}}
\makecommand{\vrPileType}{{\hat\pileType}}
\makecommand{\vrPileTypeFn}{{\hat\pileTypeFn}}

\makecommand{\designpile}{{\tilde\pile}}
\makecommand{\reverseFn}{{\rm rev}}

\makecommand{\pileTypeFn}{\pileTypeInd}
\makecommand{\designPileTypeFn}{{\tilde\pileTypeFn}}

\makecommand{\typeSchedules}{{X}}

In Section~\ref{sec:sort-random}
we discovered that
to sort a random permutation of length $n$,
the number of piles required concentrates around $n/2$
in the all-queues or all-stacks case.
(In the dealer choice case it seems to concentrate to a smaller but still constant fraction of $n$.)
Therefore, for any given capacity $\numpile$ of piles,
our chance of sorting a permutation in a single shuffle vanishes as the length $n$ grows.

In the sequel
we consider the problem of sorting in multiple rounds of shuffle, which
augments the power of a bounded number of piles
at the expense of more time to execute the sort.
In the motivating case of sort with a physical facility (\eg, a table of a specific size)
that is often a necessary trade-off.

\subsection{Modeling}

A multi-round pile shuffle is simply a sequence of basic shuffles where
the output of one round becomes the input of the next one.
A shuffle in $\numphase$ rounds can be modeled by a pair
$(\typeSchedules, \schedules)$
of
a sequence of pile type assignments $\typeSchedules = \left( \pileType_1, \ldots, \pileType_\numphase \right)$ and
a sequence of pile assignments
$\schedules = \left( \pile_1, \ldots, \pile_\numphase \right)$.
These induce a sequence of permutations
(deck states)
according to the recurrence
\begin{align*}
\begin{cases}
\perm_0 = \perm, \\
\perm_{\phase} = \shuffleOp \, \pileType_\phase \, \pile_{\phase} \, \perm_{\phase - 1}
	& \textrm{for $1 \leq \phase \leq \numphase$}, \\
\varperm = \perm_\numphase
;
\end{cases}
\end{align*}
$\varperm \in \Perm{n}$ is the final deck order
resulting by
shuffling a deck starting in order $\perm \in \Perm{n}$ in this way.
We denote the relation
\begin{align*}
\label{eq:define-shuffle-multi}
\shuffleOp \, \typeSchedules \, \schedules \, \perm = \varperm.
\end{align*}

\begin{defn}
\label{defn:hetero-multi-types-sort}
We say a multi-round assignment $\typeSchedules$ of pile types sorts a permutation $\perm$
if
there exists $\schedules$ such that
$\left( \typeSchedules, \schedules \right)$ sorts $\perm$.
\end{defn}

\subsection{Sorting with multiple pile shuffles on queues}
\label{sec:multi-round-queue-sort}

\makecommand{\vrNumPile}{{\hat\numpile}}

\makecommand{\itOne}{i}
\makecommand{\itTwo}{j}

We start by analyzing multi-round shuffle with only queues, \ie,
$$\typeSchedules = \left( \qsym^{\numpile_1}, \ldots, \qsym^{\numpile_\numphase} \right),$$
where
$\numpile_\phase$ denotes the number of queues available in each round $\phase \in [\numphase]$.
We do not assume the number of queues is the same in each round.

We observe that 
if labels $\itOne$ and $\itTwo$ are placed in the last round $\numphase$ 
so that $\pile_\numphase(\itOne) < \pile_\numphase(\itTwo)$, then
$\varperm(\itOne) < \varperm(\itTwo)$.
If they are placed in the same final pile $\pile_\numphase(\itOne) = \pile_\numphase(\itTwo)$, but
in the previous round
they are placed in different piles so that
$\pile_{\numphase-1}(\itOne) < \pile_{\numphase-1}(\itTwo)$, then
$\varperm(\itOne) < \varperm(\itTwo)$.
Continuing in this fashion it is easy to reason that
\begin{equation}
\label{eq:mqshuffle-order-identity}
\indord_s \, \varperm(s)
=
\indord_s
\Big(
\pile_\numphase(s), \pile_{\numphase-1}(s), \ldots, \pile_1(s),
\, \perm(s)
\Big)
.
\end{equation}
The final order component $\perm$ captures the fact that
if two elements $s$ and $t$ are placed in all the same piles,
they retain their original order after shuffle.

If we enumerate
pile assignments from $\pile_\phase(1) = 0$,
instead of $\pile_\phase(1) = 1$,
then 
the sequence $( \pile_\numphase, \ldots, \pile_1 )$ can be viewed as a mixed-radix digital representation
of a number $\vrpile_1$ given by a bijective embedding with recurrence
\begin{equation}
\label{eq:virtual-piles-embedding}
\begin{cases}
\vrpile_\numphase = \pile_\numphase,
\\
\vrpile_\phase = \pile_\phase + \numpile_\phase \vrpile_{\phase+1}
& 1 \leq \phase \leq \numphase - 1
.
\end{cases}
\end{equation}
The embedding has the familiar property of digital representation that
\begin{equation}
\label{eq:hetero-queue-sort-order}
\indord_s \,
\Big(
\pile_\numphase(s), \pile_{\numphase-1}(s), \ldots, \pile_1(s), 
\, \perm(s)
\Big)
=
\indord_s \, \Big( \vrpile_1(s), \, \perm(s) \Big)
.
\end{equation}
The right-hand side of~\eqref{eq:hetero-queue-sort-order} is recognizably the order expression~\eqref{eq:qshuffle-order-identity}
of a single queue shuffle
with $\vrpile_1$ as pile assignments.
In this way the embedding $\vrpile_1$ can be thought of
as assigning the cards of 
the deck 
among a set of ``virtual piles''
in a corresponding single-round queue shuffle scenario.
\begin{lemma}
\label{lemma:multi-phase-queue-sort}
A multi-round shuffle of a permutation $\perm$ on queues, with
pile assignments $\schedules = \left( \pile_\phase \right)_{\phase=1}^\numphase$,
has the same result as a single-round shuffle of $\perm$ on queues with
$\vrpile_1$~\eqref{eq:virtual-piles-embedding}.
\end{lemma}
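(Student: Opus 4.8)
The plan is to obtain the result by composing three order identities already in hand: the single-round queue-shuffle characterization~\eqref{eq:qshuffle-order-identity}, the multi-round order identity~\eqref{eq:mqshuffle-order-identity}, and the mixed-radix order-preservation~\eqref{eq:hetero-queue-sort-order}. Writing $\varperm$ for the output of the multi-round queue shuffle $(\typeSchedules,\schedules)$, I would note that~\eqref{eq:mqshuffle-order-identity} gives $\indord_s\,\varperm(s)=\indord_s\,(\pile_\numphase(s),\ldots,\pile_1(s),\perm(s))$, that~\eqref{eq:hetero-queue-sort-order} rewrites the right-hand side as $\indord_s\,(\vrpile_1(s),\perm(s))$, and that since $\perm$ is a bijection on $[n]$ the pair $(\vrpile_1(s),\perm(s))$ is injective in $s$, so~\eqref{eq:qshuffle-order-identity} applies and yields exactly $\qshuffle\,\vrpile_1\,\perm=\varperm$. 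That is the entire argument once~\eqref{eq:mqshuffle-order-identity} is in place.

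The one step that deserves a careful treatment is~\eqref{eq:mqshuffle-order-identity} itself, which the text argues only heuristically (``continuing in this fashion''). I would prove it by induction on the number of rounds $\numphase$: the base case $\numphase=1$ is just~\eqref{eq:qshuffle-order-identity}; for the step, apply~\eqref{eq:qshuffle-order-identity} to round $\numphase$ in isolation to get $\indord_s\,\varperm(s)=\indord_s\,(\pile_\numphase(s),\perm_{\numphase-1}(s))$, where $\perm_{\numphase-1}$ is the deck after $\numphase-1$ rounds, then substitute the inductive hypothesis $\indord_s\,\perm_{\numphase-1}(s)=\indord_s\,(\pile_{\numphase-1}(s),\ldots,\pile_1(s),\perm(s))$ into the second coordinate. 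The substitution is justified by an elementary \emph{refinement} fact: if $g,h$ are injections into ordered sets with $\indord_s\,g(s)=\indord_s\,h(s)$, then $\indord_s\,(f(s),g(s))=\indord_s\,(f(s),h(s))$ for any $f$, with both products under lexicographic order --- this follows directly from unwinding the lexicographic comparison, since ``$f(s)=f(s')$'' is untouched, ``$g(s)\prec g(s')$'' matches ``$h(s)\prec h(s')$'' by hypothesis, and $g(s)=g(s')\iff s=s'\iff h(s)=h(s')$ by injectivity.

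For completeness I would also recall why~\eqref{eq:hetero-queue-sort-order} holds: the recurrence~\eqref{eq:virtual-piles-embedding} is a mixed-radix positional encoding, and because round $\phase$ uses only $\numpile_\phase$ queues we may take $0\le\pile_\phase(s)<\numpile_\phase$, so the standard no-carry argument makes $\vrpile_1$ strictly increasing with respect to the lexicographic order on $(\pile_\numphase,\ldots,\pile_1)$; appending the common tail coordinate $\perm(s)$ to both sides preserves the induced order by the same refinement fact. So the main obstacle is not any real difficulty but simply the bookkeeping needed to make the inductive ``continuing in this fashion'' of~\eqref{eq:mqshuffle-order-identity} precise; everything else is a direct chaining of results already established.
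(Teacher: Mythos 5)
Your proposal is correct and follows essentially the same route as the paper: the paper's proof is exactly the chaining of~\eqref{eq:mqshuffle-order-identity}, \eqref{eq:hetero-queue-sort-order}, and~\eqref{eq:qshuffle-order-identity}. The extra material you supply (the induction with the refinement fact to justify~\eqref{eq:mqshuffle-order-identity}, and the no-carry argument for~\eqref{eq:hetero-queue-sort-order}) simply makes rigorous what the paper asserts informally in the surrounding text, so it is a welcome tightening rather than a different argument.
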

\begin{proof}
Combining~\eqref{eq:mqshuffle-order-identity} with~\eqref{eq:hetero-queue-sort-order} obtains
$\indord_s \, \varperm(s) = \indord_s \, \Big( \vrpile_1(s), \, \perm(s) \Big)$.
Subsequently, \eqref{eq:qshuffle-order-identity} demonstrates the equivalence to single-round shuffle using $\vrpile_1$.
\end{proof}

Letting
$$\numpile_\typeSchedules := \prod_{\phase=1}^\numphase \numpile_{\phase},$$
\eqref{eq:virtual-piles-embedding} defines $\vrpile_1$
as into a co-domain
$[\numpile_\typeSchedules] - 1$
of $\numpile_\typeSchedules$
virtual piles.
Note that, because
\eqref{eq:virtual-piles-embedding} is reversible,
any shuffle $\vrpile_1$ on up to $\numpile_\typeSchedules$ virtual queues---%
\eg, a minimal sort obtained via~\eqref{eq:qsort-bound-realz}---%
can be transformed readily
into a corresponding multi-round shuffle $\schedules$ on $\typeSchedules$ with the same input-output relation.

\begin{lemma}
\label{lemma:multi-round-piles}
A multi-round queue shuffle
$\typeSchedules = \left( \qsym^{\numpile_1}, \ldots, \qsym^{\numpile_\numphase} \right)$
can sort a permutation $\perm$
(in the sense of Definition~\ref{defn:hetero-multi-types-sort})
if and only if
$\ascruns(\perm) \leq
\prod_{\phase} \numpile_{\phase}
$.
\end{lemma}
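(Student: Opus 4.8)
The plan is to deduce the statement from the single-round case by way of the virtual-pile equivalence already in hand. Lemma~\ref{lemma:multi-phase-queue-sort} says that a multi-round queue shuffle of $\perm$ with pile assignments $\schedules = (\pile_1,\ldots,\pile_\numphase)$ produces exactly the same output as the single-round queue shuffle of $\perm$ with virtual assignments $\vrpile_1$ defined by the mixed-radix embedding~\eqref{eq:virtual-piles-embedding}, and that $\vrpile_1$ takes values in $[\numpile_\typeSchedules] - 1$ with $\numpile_\typeSchedules = \prod_\phase \numpile_\phase$. So the task reduces to matching ``$\typeSchedules$ sorts $\perm$'' against ``some single-round queue shuffle on at most $\numpile_\typeSchedules$ virtual queues sorts $\perm$'', and then invoking Corollary~\ref{cor:min-piles-qsort}.

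First I would prove necessity. Suppose $(\typeSchedules, \schedules)$ sorts $\perm$. By Lemma~\ref{lemma:multi-phase-queue-sort} the single-round queue shuffle with $\vrpile_1$ also sorts $\perm$, and since the co-domain of $\vrpile_1$ has size $\numpile_\typeSchedules$, this is a queue shuffle sort using at most $\numpile_\typeSchedules$ piles. Corollary~\ref{cor:min-piles-qsort} then yields $\ascruns(\perm) \le \numpile_\typeSchedules = \prod_\phase \numpile_\phase$.

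For sufficiency, assume $\ascruns(\perm) \le \prod_\phase \numpile_\phase = \numpile_\typeSchedules$. By Corollary~\ref{cor:min-piles-qsort}, or concretely via the formula~\eqref{eq:qsort-bound-realz}, there is a single-round queue shuffle with pile assignments $\designpile$ using at most $\numpile_\typeSchedules$ piles that sorts $\perm$; after the unit shift to a $\pile(1) = 0$ enumeration, $\designpile$ is valued in $[\numpile_\typeSchedules] - 1$. Because the embedding~\eqref{eq:virtual-piles-embedding} is a bijection of $\prod_\phase ([\numpile_\phase]-1)$ onto $[\numpile_\typeSchedules]-1$, there is a unique tuple $(\pile_1,\ldots,\pile_\numphase)$ with each $\pile_\phase$ valued in $[\numpile_\phase]-1$ whose image is $\designpile$; this tuple is a valid multi-round queue shuffle on $\typeSchedules$, and by Lemma~\ref{lemma:multi-phase-queue-sort} it has the same output as the single-round shuffle with $\designpile$, namely $\perm_I$. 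Hence $\typeSchedules$ sorts $\perm$ in the sense of Definition~\ref{defn:hetero-multi-types-sort}.

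The only real obstacle is bookkeeping around the enumeration conventions: Lemma~\ref{lemma:multi-phase-queue-sort} and~\eqref{eq:virtual-piles-embedding} number piles from $0$, whereas Corollary~\ref{cor:min-piles-qsort} and~\eqref{eq:qsort-bound-realz} assume $\pile(1) = 1$. I would make explicit that ``at most $\numpile_\typeSchedules$ virtual queues'' corresponds to $\vrpile_1$ being valued in $[\numpile_\typeSchedules]-1$ after this shift, and that each digit $\pile_\phase$ extracted from $\designpile$ automatically respects the per-round capacity $\numpile_\phase$; both are immediate from the definition of mixed-radix representation, so nothing substantive remains beyond this alignment.
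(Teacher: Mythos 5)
Your proof is correct and follows essentially the same route as the paper: it invokes Lemma~\ref{lemma:multi-phase-queue-sort} together with the reversibility of the mixed-radix embedding~\eqref{eq:virtual-piles-embedding} to translate between multi-round shuffles on $\typeSchedules$ and single-round queue shuffles on at most $\prod_\phase \numpile_\phase$ virtual piles, then concludes via Corollary~\ref{cor:min-piles-qsort}. You simply spell out the two directions and the enumeration-shift bookkeeping more explicitly than the paper does.
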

\begin{proof}
By Lemma~\ref{lemma:multi-phase-queue-sort},
every shuffle $\schedules$ on $\typeSchedules$ corresponds to a shuffle
$\vrpile_1$~\eqref{eq:virtual-piles-embedding}
on $\leq
\numpile_\typeSchedules
$ queues;
and vice versa.
The result then follows from Corollary~\ref{cor:min-piles-qsort}.
\end{proof}

\begin{corollary}
A permutation $\perm$ can be sorted by $\numpile$ queues in $\numphase \geq 0$ rounds
if and only if
$\ascruns(\perm) \leq \numpile^\numphase$.
\end{corollary}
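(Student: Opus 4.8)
The plan is to obtain the corollary as the homogeneous special case of Lemma~\ref{lemma:multi-round-piles}. First I would instantiate the multi-round queue schedule so that exactly $\numpile$ queues are available in every round: take $\typeSchedules = (\qsym^{\numpile}, \ldots, \qsym^{\numpile})$ with $\numphase$ factors, i.e. $\numpile_\phase = \numpile$ for all $\phase \in [\numphase]$. By definition, ``$\perm$ can be sorted by $\numpile$ queues in $\numphase$ rounds'' means precisely that this $\typeSchedules$ sorts $\perm$ in the sense of Definition~\ref{defn:hetero-multi-types-sort}. Lemma~\ref{lemma:multi-round-piles} then says this happens if and only if $\ascruns(\perm) \leq \prod_{\phase} \numpile_{\phase}$, and the product on the right collapses to $\numpile^{\numphase}$. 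That is the claimed equivalence, for every $\numphase \geq 1$.

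The only point needing separate care is the boundary value $\numphase = 0$, which the statement explicitly admits. There the schedule $\typeSchedules$ is the empty sequence, the multi-round shuffle performs no operations at all, and so $\perm$ is ``sortable in $0$ rounds'' exactly when $\perm = \perm_I$ to begin with. On the arithmetic side, $\numpile^{0} = 1$ (empty product), so the claimed condition reads $\ascruns(\perm) \leq 1$. Since every permutation has at least one ascending run, and has exactly one if and only if it is monotonically increasing---hence equal to $\perm_I$---the condition $\ascruns(\perm) \leq 1$ is equivalent to $\perm = \perm_I$. Thus the equivalence holds at this degenerate endpoint as well. (One can also view this as the empty-index-set instance of Lemma~\ref{lemma:multi-round-piles} itself, reading $\prod_{\phase}\numpile_\phase$ as $1$.)

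I do not anticipate a genuine obstacle here: the corollary is pure bookkeeping on top of Lemma~\ref{lemma:multi-round-piles} once the schedule is made uniform, together with Lemma~\ref{lemma:multi-phase-queue-sort}/\eqref{eq:mqshuffle-order-identity} in the trivial $\numphase = 0$ case. If anything is delicate, it is purely notational---ensuring that the informal phrase ``can be sorted by $\numpile$ queues in $\numphase$ rounds'' is identified with the existential in Definition~\ref{defn:hetero-multi-types-sort}, and that the empty-product and empty-round conventions are spelled out so the $\numphase = 0$ case is unambiguous rather than vacuous.
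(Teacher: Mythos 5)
Your proposal is correct and follows the paper's own route: the paper proves this corollary simply by noting it is the special case of Lemma~\ref{lemma:multi-round-piles} with $\numpile_\phase = \numpile$ in every round, so the product $\prod_\phase \numpile_\phase$ becomes $\numpile^\numphase$. Your extra care with the $\numphase = 0$ endpoint (empty product equals $1$, and $\ascruns(\perm)\leq 1$ iff $\perm=\perm_I$) is sound bookkeeping that the paper leaves implicit.
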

\begin{proof}
The result is a special case of Lemma~\ref{lemma:multi-round-piles}.
\end{proof}

\subsection{Sorting with multiple pile shuffles on queues and stacks}

\makecommand{\notimplies}{\centernot\implies}

\makecommand{\itOne}{i}
\makecommand{\itTwo}{j}

When we shuffle with a mixture of queues and stacks the order logic is more convoluted.
Now if $\itOne$ and $\itTwo$ are in the same piles in the last $k$ rounds,
the final order of $\itOne$ and $\itTwo$ depends
not only on $\perm_{\numphase-k}$, but also
on the types of those subsequent pile assignments:
%
As we have observed in the prequel,
each round
in which two labels are assigned to the same stack
reverses their order,
whereas assignment to the same queue preserves their order.
Fortunately
these order dynamics remain compatible with a virtual piles interpretation albeit
via a somewhat more convoluted embedding,
\begin{align}
\indord_s \, \varperm(s)
&=
\indord_s \, \Big(
	\designpile_\numphase(s), \ldots, \designpile_1(s), \, \designpile_0(s)
\Big)
\label{eq:hetero-multi-order}
\\
&=
\indord_s \, \left(
	\vrpile_1(s),
	\,
	(-1)^{
		\vrPileTypeFn_1(\vrpile_1(s))
	} \, \perm(s)
\right)
;
\label{eq:multi-round-ord-virtual}
\end{align}
with
order components $\left( \designpile_\phase \right)_{\phase=0}^\numphase$
to be derived presently,
along with $\vrPileTypeFn_1$ and a generalized formula for $\vrpile_1$.

Letting $\pileTypeInd_\phase$ denote the stack indicator function~\eqref{eq:pile-type-indicator}
for the pile types $\pileType_\phase$ in each round $\phase \in [\numphase]$, then
the number
of stacks that a label $s$ is assigned to in
rounds 
$\phase$ through $\numphase$
is given by
$
\sum_{\phase \leq \phase' \leq \numphase} \pileTypeInd_{\phase'}(\pile_{\phase'}(s))
$.
We can denote the parity of that count (zero or one) by
\begin{equation}
\label{eq:suffix-parity}
\designPileTypeFn_\phase(s)
\doteq
\left[
	\sum_{\phase \leq \phase' \leq \numphase} \pileTypeInd_{\phase'}(\pile_{\phase'}(s))
\right] \mod 2
.
\end{equation}
Then based on our discussion,
the order component $\designpile_\phase$
is governed by both $\pile_t$ and $\designPileTypeFn_{\phase+1}$:
Suppose $\pile_\phase(i) < \pile_\phase(j)$ in round $\phase \leq \numphase$,
and $\itOne$ and $\itTwo$ are co-assigned in all future rounds.
%
(If they are in the same piles in all the rounds,
we can
interpret the input permutation $\perm$ as pile assignments $\pile_0$ in an imaginary $t=0$ round.)
If $\itOne$ and $\itTwo$
face an even number of stack assignments in the suffix
($\designPileTypeFn_{\phase+1}(i) = \designPileTypeFn_{\phase+1}(j) = 0$)
then $\varperm(i) < \varperm(j)$;
otherwise, they face an odd number of stack assignments
($\designPileTypeFn_{\phase+1}(i) = \designPileTypeFn_{\phase+1}(j) = 1$)
and
$\varperm(i) > \varperm(j)$.
(Note that 
for the $\phase = \numphase$ case,
$\designPileTypeFn_{\numphase+1} = 0$.)
This behavior is properly captured provided
\begin{equation}
\label{eq:hetero-order-req}
\indord_s \, \designpile_\phase(s)
=
\indord_s \, (-1)^{\designPileTypeFn_{\phase+1}(s)} \, \pile_\phase(s)
,
\qquad 0 \leq \phase \leq \numphase
.
\end{equation}
\eqref{eq:hetero-order-req}
ensures, in the previous scenario, that
$\itOne$ and $\itTwo$ are indistinguishable
according to the order prefix
$(\designpile_\numphase, \ldots, \designpile_{\phase+1})$,
and that
$\varperm(i) < \varperm(j) \iff \designpile_\phase(i) < \designpile_\phase(j)$.
Something to bear in mind is that two labels with the same order component in a given round may be located
in two different \emph{actual} piles of potentially differing type, i.e.
$
\designpile_\phase(\itOne) = \designpile_\phase(\itTwo)
\notimplies
\pile_\phase(\itOne) = \pile_\phase(\itTwo)
$,
or even
$\pileType_\phase(\pile_\phase(\itOne)) = \pileType_\phase(\pile_\phase(\itTwo))$.

Any choice of $\left( \designpile_\phase \right)_{\phase=0}^\numphase$ satisfying~\eqref{eq:hetero-order-req}
would characterize sequential shuffle through \eqref{eq:hetero-multi-order}.
However,
to obtain a virtual piles embedding of those order components
as in the queues-only case,
\ie,
$\vrpile_1$ satisfying~\eqref{eq:multi-round-ord-virtual} and given by
\begin{align}
\label{eq:hetero-multi-embedding}
\begin{cases}
\vrpile_\numphase = \designpile_\numphase,
\\
\vrpile_\phase =
	\designpile_\phase
	+
	\numpile_\phase \vrpile_{\phase+1}
	,
& 1 \leq \phase \leq \numphase-1
,
\end{cases}
\end{align}
each $\designpile_\phase$ must take value on the range $[\numpile_\phase]-1$.
To accomplish that, we introduce the interval-reversing function
$\reverseFn_m(y) = -y + m - 1$;
it maps the interval $[\numpile]-1$ to itself in reverse order.
Letting $\reverseFn_m^n$ denote the composition of $\reverseFn_m$ $n$ times,
it is easy to check $\reverseFn_m^{2k}$ is identity for all $k \geq 0$, and
$\reverseFn_m^{2k+1} = \reverseFn_m$;
and therefore that for any $\pile$ and $n\geq 0$,
\[
\indord_s \, \reverseFn_m^n(\pile(s))
=
\indord_s \, (-1)^{n} \, \pile(s)
.
\]
Therefore,
defining
\begin{equation}
\label{eq:hetero-order-component}
\designpile_\phase(s)
=
\reverseFn_{\numpile_\phase}^{\designPileTypeFn_{\phase+1}(s)}(\pile_\phase(s))
\end{equation}
satisfies our order requirements~\eqref{eq:hetero-order-req}.
The first order component is simply the last round pile assignment,
$\designpile_\numphase = \pile_\numphase$,
since there are no future rounds to cause reversals
(again, $\designPileTypeFn_{\numphase+1} = 0$).
Now
\eqref{eq:hetero-multi-embedding} is bijective,
and defines $\vrpile_\phase$
for each $\phase\in [\numphase]$
as into
a co-domain
$[\vrNumPile_{\phase}] - 1$, 
where
$$ 
\vrNumPile_\phase \doteq \prod_{\phase'=\phase}^\numphase \numpile_{\phase'}
.
$$
(Note therefore, $\vrNumPile_1 = \numpile_\typeSchedules$.)

For the final component of our virtual piles interpretation of mixed multi-round shuffle,
we introduce the new set of indicator functions $\{ \vrPileTypeFn_\phase \}$, defined by
\begin{align}
\label{eq:phase-indicators}
\vrPileTypeFn_\phase(\vrpile_\phase)
= \designPileTypeFn_\phase,
\qquad 
1 \leq \phase \leq \numphase
.
\end{align}
We are motivated to do so
by the observation---%
comparing~\eqref{eq:multi-round-ord-virtual} to~\eqref{eq:pile-shuffle-model-hetero}---%
that $\vrPileTypeFn_1$ can be interpreted as the indicator function
(again in the sense of~\eqref{eq:pile-type-indicator})
associated with some assignment $\vrPileType_1$ of types to the virtual piles;
then:

\begin{prop}
\label{prop:hetero-multi-sort}
A heterogeneous multi-round shuffle $(\typeSchedules, \schedules)$
has the same effect as 
the single-round heterogeneous virtual shuffle $(\vrPileType_1, \vrpile_1)$.
\end{prop}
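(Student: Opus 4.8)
The plan is to chain the two order identities \eqref{eq:hetero-multi-order} and \eqref{eq:multi-round-ord-virtual} and then recognise the right-hand side of the latter as an instance of the single-round heterogeneous model \eqref{eq:pile-shuffle-model-hetero}. Writing $\varperm = \shuffleOp\,\typeSchedules\,\schedules\,\perm$, it suffices to prove $\indord_s\,\varperm(s) = \indord_s\,\big(\vrpile_1(s),\,(-1)^{\vrPileTypeFn_1(\vrpile_1(s))}\perm(s)\big)$: this is exactly the condition in \eqref{eq:pile-shuffle-model-hetero} for pile assignments $\vrpile_1$ whose stack-indicator is $\vrPileTypeFn_1$, and since $\vrPileTypeFn_1$ is by \eqref{eq:phase-indicators} the indicator of the virtual type assignment $\vrPileType_1$, this says $\shuffleOp\,\vrPileType_1\,\vrpile_1\,\perm = \varperm$, which is the claim.

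The first step is to establish \eqref{eq:hetero-multi-order}, i.e.\ that the order components $\designpile_\numphase,\ldots,\designpile_0$ of \eqref{eq:hetero-order-component} (with $\pile_0 := \perm$ and the convention $\designPileTypeFn_{\numphase+1} := 0$) really do reproduce the induced output order. I would argue directly on pairs: fix distinct $s,t$ and let $\phase^{\dagger}$ be the largest round in which $\pile_{\phase^{\dagger}}(s) \neq \pile_{\phase^{\dagger}}(t)$, taking $\phase^{\dagger} = 0$ if $s$ and $t$ share every pile in rounds $1,\ldots,\numphase$ (they cannot share $\pile_0 = \perm$). In every round after $\phase^{\dagger}$ the two labels sit in a common pile, so each stack among those rounds flips their relative order exactly once and each queue preserves it; the number of flips is precisely the parity $\designPileTypeFn_{\phase^{\dagger}+1}$ of \eqref{eq:suffix-parity}, which is common to $s$ and $t$ because they are co-assigned there. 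Hence $\varperm(s) < \varperm(t)$ iff $[\pile_{\phase^{\dagger}}(s) < \pile_{\phase^{\dagger}}(t)]\xor\designPileTypeFn_{\phase^{\dagger}+1}(s)$, and since $\reverseFn_m$ reverses order on its range this is exactly $\designpile_{\phase^{\dagger}}(s) < \designpile_{\phase^{\dagger}}(t)$; meanwhile for every round $\phase > \phase^{\dagger}$ one gets $\designpile_\phase(s) = \designpile_\phase(t)$ (equal actual pile, equal suffix parity). So the lexicographic comparison of $(\designpile_\numphase,\ldots,\designpile_0)$ is decided at coordinate $\phase^{\dagger}$ and agrees with $\varperm$, which is \eqref{eq:hetero-multi-order}. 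Along the way I would note that each $\designpile_\phase$ takes values in $[\numpile_\phase]-1$ because $\reverseFn_{\numpile_\phase}$ permutes that range—this is what makes the subsequent digitization legitimate—and that the ``equal component, unequal actual pile'' phenomenon flagged after \eqref{eq:hetero-order-req} is harmless precisely because it can only affect a label pair already separated by a higher-index component.

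The second step packages the components into the virtual shuffle. Since each $\designpile_\phase$ lands in $[\numpile_\phase]-1$ and \eqref{eq:hetero-multi-embedding} is the bijective mixed-radix embedding, the digital-representation identity—exactly as in \eqref{eq:hetero-queue-sort-order}—gives $\indord_s\,(\designpile_\numphase(s),\ldots,\designpile_1(s)) = \indord_s\,\vrpile_1(s)$. For the trailing coordinate, \eqref{eq:hetero-order-component} at $\phase = 0$ together with the reversal identity $\indord_s\,\reverseFn_m^{n}(\pile(s)) = \indord_s\,(-1)^{n}\pile(s)$ gives $\indord_s\,\designpile_0(s) = \indord_s\,(-1)^{\designPileTypeFn_1(s)}\perm(s)$, and $\designPileTypeFn_1(s) = \vrPileTypeFn_1(\vrpile_1(s))$ by \eqref{eq:phase-indicators} (the right-hand side is well defined because $\vrpile_1(s) = \vrpile_1(t)$ forces $\pile_\phase(s) = \pile_\phase(t)$ in every round, hence equal suffix parities). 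Appending this trailing coordinate to the previous identity yields \eqref{eq:multi-round-ord-virtual}, and combining with \eqref{eq:hetero-multi-order} and \eqref{eq:pile-shuffle-model-hetero} closes the argument as in the first paragraph.

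The main obstacle is the first step—pinning down \eqref{eq:hetero-multi-order}. The delicate points are the parity bookkeeping (being explicit that $\designPileTypeFn_{\phase^{\dagger}+1}$ is common to both labels, and that the reversal count over rounds $\phase^{\dagger}+1,\ldots,\numphase$ equals this parity) and the observation that applying $\reverseFn$ an even or odd number of times is order-preserving or order-reversing while keeping every component inside its radix range. Once \eqref{eq:hetero-multi-order} is in hand, the mixed-radix digitization and the final pattern-match against \eqref{eq:pile-shuffle-model-hetero} are mechanical and mirror the queues-only development of Section~\ref{sec:multi-round-queue-sort}.
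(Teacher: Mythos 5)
Your proposal is correct and follows essentially the same route as the paper: establish the order-component identity \eqref{eq:hetero-multi-order} via the co-assignment/suffix-parity argument, keep each $\designpile_\phase$ in its radix range with $\reverseFn_{\numpile_\phase}$, apply the mixed-radix embedding \eqref{eq:hetero-multi-embedding} to reach \eqref{eq:multi-round-ord-virtual}, and pattern-match against \eqref{eq:pile-shuffle-model-hetero} with $\vrPileTypeFn_1$ as the stack indicator of $\vrPileType_1$. Your pairwise argument via the last round $\phase^{\dagger}$ in which the two labels are separated, and the well-definedness check for $\vrPileTypeFn_1$, simply make explicit steps the paper treats discursively.
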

Remarkably,
we can fully precompute
$\vrPileTypeFn_1$---and therefore virtual types $\vrPileType_1$---%
given only the sequence $\typeSchedules$ of type assignments.
(We derive the equations
at the end of this section.)
Therefore, Prop.~\ref{prop:hetero-multi-sort} brings to bear
the full power of Lemma~\ref{lemma:hetero-func-bound},
generalizing its role in deciding sort feasibility to the multi-round case
through our virtual piles interpretation.
Moreover,
a minimal sort on $\typeSchedules$
can be computed again in linear time if feasible:
First, we produce $\vrPileType_1$ (up to the length $n$ of the permutation)
and obtain $\vrpile^*_1$ if feasible using the single-round solution~\eqref{eq:min-piles-mixed}.
We can then transform $\vrpile^*_1$ into a multi-round assignment
$\schedules^*$ by reversing~\eqref{eq:hetero-multi-embedding}
followed by~\eqref{eq:hetero-order-component}.
The whole process can be done in $O(n)$ time.

\emph{Precomputing $\vrPileTypeFn_1$}:
To end the section we derive the equations to compute $\vrPileTypeFn_1$
(and thereby $\vrPileType_1$)
given the sequence of pile type assignments $\typeSchedules$:

To begin, we can write~\eqref{eq:suffix-parity} as a recurrence
\begin{align}
\label{eq:suffix-parity-rec}
\begin{cases}
\designPileTypeFn_\numphase = \pileTypeFn_\numphase(\pile_\numphase)
\\
\designPileTypeFn_\phase
= \pileTypeFn_\phase(\pile_\phase) \xor \designPileTypeFn_{\phase+1},
& 1 \leq \phase \leq \numphase - 1
.
\end{cases}
\end{align}
Then, by substituting~\eqref{eq:phase-indicators} into~\eqref{eq:suffix-parity-rec}---%
and given $\vrpile_\numphase = \designpile_\numphase = \pile_\numphase$---%
we obtain
\begin{align}
\begin{cases}
\vrPileTypeFn_\numphase = \pileTypeFn_\numphase
\\
\vrPileTypeFn_\phase(\vrpile_\phase)
= \pileTypeFn_\phase(\pile_\phase) \xor \vrPileTypeFn_{\phase+1}( \vrpile_{\phase+1} ),
& 1 \leq \phase \leq \numphase - 1
.
\label{eq:virtual-pile-type-recur-1}
\end{cases}
\end{align}
Finally we substitute expressions for $\vrpile_\phase$ and $\pile_\phase$ in~\eqref{eq:virtual-pile-type-recur-1}:
Namely,
\eqref{eq:hetero-multi-embedding} provides
$
\vrpile_\phase = 
\designpile_\phase
+
\numpile_\phase \vrpile_{\phase+1} 
$,
and
\eqref{eq:hetero-order-component} can be reversed to obtain
$
\makecommand{\arg}{\vrPileTypeFn_{\phase+1}(\vrpile_{\phase+1})}
\pile_\phase = \reverseFn_{\numpile_\phase}^{\arg}(\designpile_\phase)
$,
for each of $1 \leq \phase \leq \numphase - 1$.
Substituting these
we obtain
\begin{varscope}
\makecommand{\index}{\phase}
\makecommand{\realParity}{ \vrPileTypeFn_{\index+1}(\vrpile_{\index+1}) }
\begin{align}
\label{eq:type-indicator-recurrence}
\vrPileTypeFn_{\index}(\designpile_\index + \numpile_\index \vrpile_{\index+1})
&=
\pileTypeFn_\index\left(
	\reverseFn_{\numpile_\index}^{\realParity}
	(\designpile_\index)
\right)
\xor
\realParity
, \qquad
1 \leq \phase \leq \numphase - 1
.
\end{align}
\end{varscope}%
Given that $\vrpile_{\phase+1}$ and $\designpile_\phase$ vary
over $[\vrNumPile_{\phase+1}] - 1$ and $[\numpile_\phase] - 1$,
respectively,
\eqref{eq:type-indicator-recurrence} indeed defines
$\vrPileTypeFn_\phase$
for each $\phase \in [\numphase - 1]$---%
purely in terms of $\pileTypeFn_\phase$ and $\vrPileTypeFn_{\phase+1}$---%
over the whole co-domain $[\vrNumPile_\phase] - 1$ of $\vrpile_\phase$.

\makecommand{\invert}{\bar}

\subsection{Sorting with multiple pile shuffles on stacks}

In this brief section 
we analyze multi-round shuffle with only stacks, \ie,
$$\typeSchedules = \left( \ssym^{\numpile_1}, \ldots, \ssym^{\numpile_\numphase} \right),$$
where
$\numpile_\phase$ denotes the number of stacks available in each round $\phase \in [\numphase]$.

In this special case of the general sort, 
for each $\phase \in [\numphase]$,
$\pileTypeFn_\phase = 1$
uniformly over all of $[\numpile_\phase] - 1$.
This reduces the recurrence
of~\eqref{eq:virtual-pile-type-recur-1}
considerably
to
$\vrPileTypeFn_\phase(\vrpile_\phase) = 1 \xor \vrPileTypeFn_{\phase+1}( \vrpile_{\phase+1} )$,
from which
we obtain
$\vrPileTypeFn_\phase = \left( 1 + \numphase - \phase \right) \mod 2$
uniformly over $[\vrNumPile_\phase] - 1$;
in particular,
$\vrPileTypeFn_1 = \numphase \mod 2$.
Therefore,
if the number of rounds $\numphase$ is even, then
$\typeSchedules$ is equivalent to non-sequential pile shuffle
on $\numpile_\typeSchedules$ virtual queues ($\qsym^{\numpile_\typeSchedules}$);
conversely, 
if the number of rounds $\numphase$ is odd, then
it is equivalent to non-sequential pile shuffle on as many virtual stacks ($\ssym^{\numpile_\typeSchedules}$).

\begin{lemma}
\label{lemma:multi-round-stacks}
A multi-round stack shuffle
$\typeSchedules = \left( \ssym^{\numpile_1}, \ldots, \ssym^{\numpile_\numphase} \right)$
can sort a permutation $\perm$
(in the sense of Definition~\ref{defn:hetero-multi-types-sort})
if and only if
either
$\numphase$ is even and
$\ascruns(\perm) \leq \prod_{\phase} \numpile_{\phase}$
or
$\numphase$ is odd and
$\descrunFn(\perm) \leq \prod_{\phase} \numpile_{\phase}$.
\end{lemma}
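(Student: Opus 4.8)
The plan is to recycle the virtual-piles machinery just assembled and reduce to the homogeneous single-round results, exactly as in the proof of Lemma~\ref{lemma:multi-round-piles}. First I would observe that in the all-stacks case the preceding paragraph already computes $\vrPileTypeFn_1 = \numphase \bmod 2$ \emph{uniformly} over the whole co-domain $[\numpile_\typeSchedules]-1$ of $\vrpile_1$, so the virtual type assignment $\vrPileType_1$ is homogeneous: every virtual pile gets type $\qsym$ when $\numphase$ is even and type $\ssym$ when $\numphase$ is odd. By Prop.~\ref{prop:hetero-multi-sort}, any multi-round shuffle $(\typeSchedules,\schedules)$ acts on $\perm$ exactly as the single-round virtual shuffle $(\vrPileType_1,\vrpile_1)$ does; in particular one sorts $\perm$ iff the other does.

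Next I would note that the correspondence $\schedules \leftrightarrow \vrpile_1$ is a bijection between multi-round pile assignments on $\typeSchedules$ and virtual pile assignments into $[\numpile_\typeSchedules]-1$: it is \eqref{eq:hetero-order-component} followed by \eqref{eq:hetero-multi-embedding}, each of which is invertible (in the all-stacks case \eqref{eq:hetero-order-component} is merely a fixed power of the involution $\reverseFn_{\numpile_\phase}$, and \eqref{eq:hetero-multi-embedding} is the usual mixed-radix digit map). Restricting to the first $n$ virtual piles --- the only ones a length-$n$ permutation can occupy --- this gives: $\typeSchedules$ sorts $\perm$ (in the sense of Definition~\ref{defn:hetero-multi-types-sort}) if and only if $\perm$ can be sorted by a single-round shuffle on $\numpile_\typeSchedules = \prod_{\phase}\numpile_\phase$ queues (when $\numphase$ is even) or on that many stacks (when $\numphase$ is odd).

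Finally I would close with the homogeneous feasibility criteria: Corollary~\ref{cor:min-piles-qsort} says $m$ queues sort $\perm$ iff $\ascruns(\perm)\le m$, and Corollary~\ref{cor:stack-minpiles} says $m$ stacks sort $\perm$ iff $\descrunFn(\perm)\le m$; substituting $m=\prod_\phase\numpile_\phase$ yields the two stated cases. (The degenerate value $\numphase=0$, with empty product $1$ and $\numphase$ even, is consistent: zero rounds leave $\perm$ untouched, and $\ascruns(\perm)\le 1$ exactly when $\perm=\perm_I$.) The one step I would actually write out carefully is the bijectivity and well-definedness of $\schedules\leftrightarrow\vrpile_1$ in this reduced setting --- checking that inverting \eqref{eq:hetero-multi-embedding} never overflows any $[\numpile_\phase]-1$, so that feasibility transfers in both directions --- since everything else is a direct appeal to Prop.~\ref{prop:hetero-multi-sort} and the single-round corollaries.
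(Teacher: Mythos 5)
Your proposal is correct and follows essentially the same route as the paper: it uses the uniform parity computation $\vrPileTypeFn_1 = \numphase \bmod 2$ from the preceding discussion to reduce, via Prop.~\ref{prop:hetero-multi-sort} and the invertible virtual-piles correspondence (as in Lemma~\ref{lemma:multi-round-piles}), to single-round sort on $\prod_\phase \numpile_\phase$ virtual queues or stacks, then cites Corollaries~\ref{cor:min-piles-qsort} and~\ref{cor:stack-minpiles}. The paper's own proof is just a terser statement of exactly this argument, so your extra care about the bijection and the $\numphase = 0$ case is fine but not a departure.
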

\begin{proof}
When the number of rounds is even
multi-round stack sort feasibility is governed by the condition of Corollary~\ref{cor:min-piles-qsort}
on the number of the permutation's ascending runs.
When the number of rounds is odd, feasibility is governed by Corollary~\ref{cor:stack-minpiles} instead, and 
the number of descending runs.
\end{proof}

\section{Sorting in multiple
rounds
of Dealer's choice pile shuffle}
\label{sec:dealer-choice-multi}

We end the present study of multi-round pile shuffle
by re-introducing dealer choice 
in the selection of pile types.
In Section~\ref{subsec:dealer-choice-single}
we showed that
sort feasibility
remains tractable
with dealer choice
in the single-round case:
A minimizing assignment of pile types $\pileType^*$ can be obtained
greedily, \ie by~\eqref{eq:pile-type-look-ahead},
from a set
whose size is exponential in the number of piles.
Meanwhile
feasibility
remains tractable in the multi-round setting on any \emph{fixed} sequences of pile types,
namely by the reduction to single-round shuffle of Prop.~\ref{prop:hetero-multi-sort}.

In this section---and in the forthcoming companion paper---%
we are motivated by two problems of general interest:
\begin{probenv}[Repeated-round Dealer's choice pile shuffle sort]
\label{prob:repeated-round}
Decide whether
a permutation $\perm$ can be sorted in $\numphase$ rounds of Dealer's choice shuffle on $\numpile$ piles;
the permutation $\perm$ and parameters $\numphase$ and $\numpile$ are the instance data.
\end{probenv}
Problem~\ref{prob:repeated-round}
captures the motivating scenario of sort in bounded time, in physical piles, on a surface
whose shape does not typically change from round to round.

\begin{probenv}[Variable-round Dealer's choice pile shuffle sort]
\label{prob:variable-round}
Given per-round pile capacities $\left( \numpile_1, \ldots, \numpile_\numphase \right)$,
decide whether a permutation $\perm$ can be sorted in $\numphase$ rounds of Dealer's choice shuffle,
where
$\numpile_\phase$ is the number of piles in each round $\phase \in [\numphase]$.
\end{probenv}
Problem~\ref{prob:variable-round}
generalizes Problem~\ref{prob:repeated-round}---%
the former contains all the instances of the latter---%
in that each round may have different pile capacity.

Unfortunately, dealer choice brings non-trivial complexity to 
the multi-round setting.
For example,
in the case of Problem~\ref{prob:variable-round},
while it is tempting
to try and obtain a solution
from a minimal sort on $\prod_\phase \numpile_\phase$ virtual piles of dealer-chosen types---%
by invoking Prop.~\ref{prop:hetero-multi-sort}---%
that approach is not sound.
That is because
in general
not every one
of the 
virtual pile types
may be chosen independently, since
there are only a generally smaller number
$\sum_{\phase} \numpile_\phase$
of \emph{actual} piles.

In fact, in the aforementioned companion paper 
we will demonstrate
by reductions from the Boolean satisfiability problem (SAT) that
multi-round Dealer's choice pile shuffle sort (feasibility)
can be
\NPHard/ in general.
In particular, the main result of the forthcoming paper is the following proposition:
\begin{prop}
\label{thm:variable-round-hard}
The variable-round Dealer's choice pile shuffle sort problem is \NPHard/.
\end{prop}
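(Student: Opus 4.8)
The plan is to construct a polynomial-time reduction from SAT to the variable-round Dealer's choice pile shuffle sort problem. Given a Boolean formula $\varphi$ on variables $v_1,\ldots,v_{\nvar}$ with clauses $C_1,\ldots,C_{\nclause}$, I would build a permutation $\perm_\varphi$ together with per-round capacities $(\numpile_1,\ldots,\numpile_\numphase)$ so that $\perm_\varphi$ is sortable if and only if $\varphi$ is satisfiable. The key structural fact to exploit is the one already flagged in the text: by Prop.~\ref{prop:hetero-multi-sort} and Lemma~\ref{lemma:hetero-func-bound}, once the full virtual type schedule $\vrPileType_1$ is fixed, feasibility of sort on the corresponding virtual piles is completely determined and checkable in linear time; but the virtual types are \emph{not} freely chosen --- they are a fixed function, via the recurrence~\eqref{eq:type-indicator-recurrence}, of the $\sum_\phase \numpile_\phase$ \emph{actual} per-round type choices $\pileType_\phase$. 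So the plan is to engineer a schedule of rounds and capacities in which the actual type choices act as Boolean variables, the induced virtual types encode clause-satisfaction tests, and the permutation's ascent/descent structure (via Lemma~\ref{lemma:hetero-func-bound} and~\eqref{eq:min-piles-mixed}) is exactly tight iff every clause is satisfied.

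Concretely, I would first design a ``variable gadget'': a small block of rounds whose actual pile types, being a single bit each (queue vs.\ stack) per actual pile, serve as the truth assignment to $v_1,\ldots,v_{\nvar}$ --- one actual pile per variable in some designated round, with capacities chosen so that exactly one type bit per variable is a free choice. Next I would design a ``clause gadget'': a segment of the permutation $\perm_\varphi$ whose local ascent/descent pattern is arranged so that the minimal-sort recurrence~\eqref{eq:min-piles-mixed} consumes one extra virtual pile at that segment precisely when the relevant virtual type (a known parity-combination, by~\eqref{eq:type-indicator-recurrence}, of the variable bits appearing in the clause) is the ``wrong'' type --- i.e.\ the clause is unsatisfied by the current assignment. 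Summing over all clauses, the total number $\vrpile_1^*(n)$ of virtual piles used by the minimal sort equals a fixed baseline plus the number of violated clauses; I would then set the capacity product $\prod_\phase \numpile_\phase$ (equivalently the final virtual-pile budget) to that baseline, so that sortability is equivalent to zero violated clauses, i.e.\ satisfiability of $\varphi$. Throughout, I would verify the reduction is polynomial: $\numphase$, the capacities, and $|\perm_\varphi|$ should all be polynomial in $\nvar+\nclause$, which is plausible since each variable and each clause contributes only a bounded-size block.

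The main obstacle --- and where most of the real work lies --- is controlling the parity recurrence~\eqref{eq:type-indicator-recurrence} so that the map from actual type bits to the virtual types that matter is exactly the clause-satisfaction predicate, and \emph{only} that. Two difficulties compound here. First, \eqref{eq:type-indicator-recurrence} mixes the actual type bit $\pileTypeFn_\phase$ with the interval-reversal $\reverseFn_{\numpile_\phase}$ applied inside its argument, so the virtual type at a given virtual pile index depends on $\pileTypeFn_\phase$ evaluated at a \emph{reversed} actual-pile coordinate; I must choose capacities and routing so this reversal is either trivial (capacity-$1$ rounds, where $\reverseFn_1$ is the identity) or harmless. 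Second, a clause is a disjunction, not a parity or conjunction, so a literal XOR of variable bits will not directly express ``clause satisfied''; I would handle this the way SAT reductions typically do --- either by routing the clause's literals through a sequence of virtual piles so that the relevant minimal-sort step is skipped as soon as \emph{any} one literal is satisfied (an OR realized by ``at least one of several tight-avoidance opportunities fires''), or by first reducing to a parity-friendly normal form. I would also need a consistency check that the same variable appearing in multiple clauses uses the same actual type bit everywhere --- which is automatic if each variable owns a single actual pile in a single designated round and all its occurrences reference that pile's (virtual descendants') type. Verifying that these gadgets compose without unintended interference --- i.e.\ that the baseline virtual-pile count is independent of the assignment and that cross-gadget carries in the mixed-radix embedding~\eqref{eq:hetero-multi-embedding} do not leak --- is the delicate bookkeeping I would expect to occupy the bulk of the companion paper's proof.
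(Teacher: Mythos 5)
First, a point of scope: this paper does not actually prove Proposition~\ref{thm:variable-round-hard} --- it is stated as the main result of a forthcoming companion paper, with only the high-level hint that the proof is a reduction from SAT exploiting the fact that the $\prod_\phase \numpile_\phase$ virtual pile types cannot be chosen independently because they are determined, via~\eqref{eq:type-indicator-recurrence}, by only $\sum_\phase \numpile_\phase$ actual type choices. You have correctly identified that crux, and your overall strategy (encode a truth assignment in the actual type bits, encode clause tests in the ascent/descent structure of a constructed permutation, and make feasibility via Prop.~\ref{prop:hetero-multi-sort} and Lemma~\ref{lemma:hetero-func-bound} tight exactly when all clauses are satisfied) is consistent with everything the paper says about the intended argument. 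So as a research plan it is pointed in the right direction.

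However, as a proof it has genuine gaps: the central constructions are named but never built. The clause gadget is the heart of the matter --- you acknowledge that the virtual types are parity (XOR) combinations of actual type bits while a clause is a disjunction, and you gesture at two ways to bridge this, but you do not exhibit a permutation segment, routing, or capacity schedule for which the minimal-sort recurrence~\eqref{eq:min-piles-mixed} provably consumes an extra virtual pile if and only if the clause is violated; the claim that the minimal virtual pile count equals ``baseline plus number of violated clauses'' is a target property, not a demonstrated one. A second unaddressed difficulty is that in the Dealer's choice problem \emph{every} actual pile's type is chosen adversarially-freely by the dealer: you cannot simply ``designate'' some piles as variables and assume the remaining piles' types are fixed or harmless; the construction must force (or be indifferent to) the types of all auxiliary piles, and your proposal contains no forcing mechanism for them. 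Finally, the non-interference issues you flag --- the reversal $\reverseFn_{\numpile_\phase}$ inside the argument of~\eqref{eq:type-indicator-recurrence}, cross-gadget effects in the mixed-radix embedding~\eqref{eq:hetero-multi-embedding}, and consistency of a variable across multiple clause occurrences --- are exactly where such reductions usually fail, and you explicitly defer them rather than resolve them. Until those gadgets are constructed and verified, the reduction, and hence the NP-Hardness claim, is not established.
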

Unfortunately our study leaves as an open question whether repeated-round sort feasibility is \NPHard/.
However, the author's conjecture, which we discuss in the second paper, is that it is.
\begin{conj}
The repeated-round Dealer's choice pile shuffle sort problem is \NPHard/.
\end{conj}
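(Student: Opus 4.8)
The statement is the author's \emph{conjecture}, so what follows is a plan of attack rather than a proof; the difficulty it runs into is presumably exactly why only a conjecture is stated. Since the repeated-round problem (Problem~\ref{prob:repeated-round}) is the restriction of the variable-round problem (Problem~\ref{prob:variable-round}) to constant per-round capacities, the trivial reduction goes the wrong way; to transfer the hardness of Proposition~\ref{thm:variable-round-hard} one must reduce the variable-round problem \emph{to} the repeated-round problem, or else re-engineer the companion paper's SAT reduction so that every round uses the same number of piles. I would attempt the first route. Given a variable-round instance $(\perm,(\numpile_1,\dots,\numpile_\numphase))$, put $\numpile:=\max_{\phase}\numpile_\phase$ and try to build, in polynomial time, a permutation $\perm'$ that is sortable in $\numphase$ rounds on $\numpile$ uniform piles if and only if $\perm$ is sortable under the given capacity profile. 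The core difficulty is that surplus capacity is always helpful — in a round the dealer may simply leave piles unused — so the extra $\numpile-\numpile_\phase$ piles available in an under-capacity round must be \emph{forcibly consumed}.

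The natural device is a \emph{scaffold}: augment $\perm$ with a block of fresh, large-valued labels whose internal order is an alternating (``zigzag'') sequence. From the single-round dealer's-choice analysis of Section~\ref{subsec:dealer-choice-single} — the greedy rule \eqref{eq:pile-type-look-ahead} together with \eqref{eq:min-piles-mixed} — a strict up-down alternation of length $2c$ forces the dealer to open $c$ piles no matter how pile types are chosen, since neither a queue nor a stack absorbs two consecutive steps of the alternation without an increment of $\pile^*$. Because $\pile^*$ in \eqref{eq:min-piles-mixed} is nondecreasing in the label index, a block made of the largest labels always lands in the highest-indexed piles, so in the virtual-piles picture of Proposition~\ref{prop:hetero-multi-sort} the scaffold and the SAT-encoding ``payload'' would occupy disjoint virtual piles, and the composite sort would factor, via \eqref{eq:hetero-multi-embedding}, as (scaffold sort) $\times$ (payload sort on piles $\{1,\dots,\numpile_\phase\}$ in round $\phase$). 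The steps, in order, would then be: (1) recall the companion paper's variable-round SAT reduction and pinpoint where round-width variation is exploited; (2) prove the one-round ``zigzag burns exactly $c$ piles under every type schedule'' lemma; (3) prove a decoupling lemma — a top-label block pinned to the top piles in each round never shares a virtual pile with the rest — so the multi-round sort genuinely factors; (4) assemble the construction and check polynomiality.

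The main obstacle lies in making steps (2)–(3) hold \emph{simultaneously across all $\numphase$ rounds}. The virtual-piles machinery of Section~\ref{sec:multi-round} reduces a \emph{fixed} type schedule to a single round, but under dealer's choice the dealer can use early rounds to \emph{pre-sort} the scaffold, so a naively appended zigzag is not rigid: after one round the scaffold sub-permutation has been re-ordered by the dealer's choices and may need far fewer piles thereafter, freeing capacity for the payload and breaking the intended equivalence. One therefore needs a scaffold whose per-round pile consumption is genuinely forced — for instance one that cannot be appreciably simplified by any single round's choices, perhaps by recursively embedding hard sub-instances, or via an accounting argument that the dealer's $\le\numphase\,\numpile$ type bits cannot both tame the scaffold early and still route the payload — together with a proof that the scaffold never ``leaks'' budget to the payload in any round. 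Establishing this cross-round rigidity while keeping the construction of polynomial size is, I expect, the crux, and is precisely where the companion-paper argument stops short of a proof. (The alternative route — directly re-engineering the SAT gadgets to be uniform-width — faces the same bottleneck: the hard instances one currently knows how to build seem to want rounds of differing widths.)
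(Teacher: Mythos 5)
This statement is a \emph{conjecture}: the paper explicitly leaves repeated-round Dealer's choice sort feasibility as an open question and defers even the discussion to the companion paper, so there is no proof here to compare your attempt against. You correctly recognized this and submitted an attack plan rather than a proof, which is the right reading of the statement's status; by the same token, your proposal does not establish the result, and it should not be graded as if it did.

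As a plan, it is sensible and targets the right issue. Reducing the variable-round problem (Prop.~\ref{thm:variable-round-hard}) to the repeated-round one by padding each round up to $\numpile=\max_\phase \numpile_\phase$ piles is the natural route, and your single-round observation is sound: under dealer's choice an up--down alternation has all ascending and descending runs of length two, so by Lemma~\ref{lemma:hetero-func-bound} and the greedy rule~\eqref{eq:pile-type-look-ahead} a zigzag of length $2c$ does force about $c$ piles in one round regardless of the chosen types. The genuine obstacle is exactly the one you name: the scaffold is only rigid for a single round. In a multi-round shuffle the dealer may spend early rounds partially sorting the scaffold (its image after one round is no longer a zigzag), after which it consumes far fewer piles and the surplus capacity leaks back to the payload; moreover, the virtual-piles factorization you invoke (Prop.~\ref{prop:hetero-multi-sort}, \eqref{eq:hetero-multi-embedding}) is stated for \emph{fixed} type schedules, so the claimed decoupling of scaffold and payload would itself need a new argument under dealer's choice. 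Your proposal stops short precisely at this cross-round rigidity requirement, which is why the statement remains a conjecture; nothing in your sketch contradicts the paper, but nothing in it closes that gap either.
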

Sort feasibility is generally no harder than NP since
the assignment $\typeSchedules$ of all (real) pile types is a checkable certificate of feasibility.

\subsection{Multi-round mixed sort with a fixed \emph{number} of piles}

Although we are conjecturing that 
the repeated-round Prob.~\ref{prob:variable-round}
is \NPHard/,
if we restrict
it
to an a priori fixed number $\numpile$ of piles,
then a brute force search over all possible assignments decides feasibility in (technically) polynomial time:
In that case,
if 
there are greater than
$\numphase' = \left\lceil \log_\numpile(n) \right\rceil$ rounds, then
a permutation of length $n$ is trivially sortable with $\numpile^{\numphase'} \geq n$ virtual piles.
Otherwise,
we need check only as many as
$\left( 2^\numpile \right)^{\numphase'}
\in O\left( n^{
	(\numpile / \log_2 \numpile)
} \right)$
certificates, each in $O(n)$ time.
Note,
while this approach is technically polynomial-time in the permutation length,
it suffers exponential growth in the number of piles $\numpile$.
This is
reminiscent of
the way SAT is technically polynomial time on any fixed set of variables, even though it is \NPHard/ in general.

\section{Conclusion}
\label{sec:conclusion}

In this paper
we have formalized the pile shuffle, and analyzed its capabilities as a sorting device.
We have characterized the sortable permutations of one or more sequential rounds of pile shuffle
in three variations:
(1) on queues,
(2) on stacks, and
(3) on a heterogeneous mixture of both piles types.
As far as the author is aware, ours is the first formal study to consider
arbitrary mixtures of the two pile types.

We formulated the pile shuffle as a parameterized relation between two permutations.
%
%
The formulation characterizes pile shuffle as a set of functions which,
when paired with permutations representing the starting states of a deck of cards,
inject those cards into an ordered set
reflecting the same label order as 
the deck state after shuffle.
Our characterization is more or less the same thing as $P$-partitions~\cite{FULMAN2021112448},
but stated in a way most conducive to our analysis.

The first part of the paper focused on the single-round case.
Analyzing our model,
we developed necessary and sufficient conditions~\eqref{eq:hetero-sort}
for a given shuffle to sort an input permutation.
Notably, these conditions can be computed efficiently
in a single scan of
the permutation.
%
From these we derived formulas,
in each case,
for the minimum number of piles required for sort, 
and to transcribe a sort on the minimum number of piles.
%
In the homogeneous all-queues or all-stacks cases,
the bounds are in terms of well-studied ascent and descent permutation statistics with deep connections to combinatorics.
In the case that
the dealer is allowed to choose the types of the piles arbitrarily during the shuffle,
we augmented the scan for a minimal sort on fixed pile types~\eqref{eq:min-piles-mixed}
with a greedy method for choosing those types~\eqref{eq:pile-type-look-ahead},
and demonstrated that it obtains a minimal sort in the presence of dealer choice.

The second key contribution of the paper is
a mathematical reduction of
multi-round shuffle on fixed pile types,
into
single-round shuffle on fixed-type ``virtual piles''.
The reduction lifts all of our results about fixed-type single-round pile shuffle to the multi-round setting.
It has the fundamental property that
the number of piles
available for the virtual shuffle 
is equal to the product of the number of piles
available in each round of the actual shuffle.
Therefore, we confirm that repetition augments the power of pile shuffle exponentially, in that
$m$ piles over $T$ rounds has the capacity of $m^T$ piles in a single round.
Feasibility of multi-round shuffle sort
on fixed pile types
remains decidable in linear time, because
the formula of our reduction~\eqref{eq:type-indicator-recurrence}
can be truncated to the number of piles used,
which should not be more than the size of the permutation.

In a sudden halt to the momentum of our study,
we discovered that 
dealer choice brings non-trivial complexity to
sort feasibility in the multi-round setting.
We introduced two sort feasibility problems of multi-round Dealer's choice pile shuffle,
and motivated a forthcoming companion paper
in which we demonstrate,
by a novel reduction from Boolean satisfiability (SAT), that
at least one of those variants is NP-Hard.

\bibliographystyle{unsrt}
\bibliography{main}


\end{document}